\font\smallit=cmti10
\renewcommand\section{\@startsection {section}{1}{\z@}
{-30pt \@plus -1ex \@minus -.2ex}
{2.3ex \@plus.2ex}
{\normalfont\normalsize\bfseries\boldmath}}
\renewcommand\subsection{\@startsection{subsection}{2}{\z@}
{-3.25ex\@plus -1ex \@minus -.2ex}
{1.5ex \@plus .2ex}
{\normalfont\normalsize\bfseries\boldmath}}
\renewcommand{\@seccntformat}[1]{\csname the#1\endcsname. }
\newtheorem{theorem}{Theorem}
\newtheorem{lemma}{Lemma}
\newtheorem{conjecture}{Conjecture}
\newtheorem{corollary}{Corollary}
\newcommand{\ZL}{\mathcal{R}(L)}
\newcommand{\Z}{\mathbb{Z}}
\newcommand{\R}{\mathbb{R}}
\newcommand{\Q}{\mathbb{Q}}
\newcommand{\PL}{{\mathcal{G}\mathcal{P}}(L)}
\newcommand{\DL}{{\mathcal{D}}(L)}
\newcommand{\al}{\alpha}
\newcommand{\ao}{\alpha_0}
\newcommand{\an}{\alpha_n}
\newcommand{\zi}{\Z[i]}
\newcommand{\re}{{\text{Re}}}
\newcommand{\im}{{\text{Im}}}
\newcommand{\be}{\begin{equation}}
\newcommand{\ee}{\end{equation}}
\newcommand{\beq}{\begin{eqnarray*}}
\newcommand{\eeq}{\end{eqnarray*}}
\begin{document}

\begin{center}
\uppercase{\bf Walking to infinity along Gaussian lines}
\vskip 20pt
{\bf Elsa Magness
%\footnote{Elsa's funding}
}\\
{\smallit Department of Mathematics, Seattle University, Seattle, WA 98122, USA}\\
{\tt emagness@brynmawr.edu}\\
\vskip 10pt
{\bf Brian Nugent
%\footnote{Brian's funding}
}\\
{\smallit Department of Mathematics, Seattle University, Seattle, WA 98122, USA}\\
{\tt bnugent@uw.edu}\\ 
\vskip 10pt
{\bf Leanne Robertson}\\
{\smallit Department of Mathematics, Seattle University, Seattle, WA 98122, USA}\\
{\tt robertle@seattleu.edu}\\ 
\end{center}
\vskip 20pt
%\centerline{\smallit Received: , Revised: , Accepted: , Published: } % We will fill in the dates
\vskip 30pt

\centerline{\bf Abstract}
\noindent
We study analogies between the rational integers on the real line and the Gaussian integers on other lines in the complex plane. This includes a Gaussian analog of Bertrand's Postulate, the Chinese Remainder Theorem, and the periodicity of divisibility. We also computationally investigate the distribution of Gaussian primes along these lines and leave the reader with several open problems. 

\pagestyle{myheadings}
%\markright{\smalltt INTEGERS: 19 (2019)\hfill}
\thispagestyle{empty}
\baselineskip=12.875pt
\vskip 30pt

\section{Introduction.}\label{intro}
Is it possible to walk from the origin in the complex plane to infinity using steps of bounded length and stepping only on Gaussian primes? Several authors have worked on this intriguing question since it was first posed by Basil Gordon  in 1962.  Erd\"os conjectured that such a walk to infinity is  impossible, but the problem remains unsolved today (see \cite{Gethner} for a discussion of the contradictory references to Erd\"os' role in this problem). In 1970, Jordan and Rabung \cite{JR} showed that steps of length at least 4 would be required, and in 1998, Gethner, Wagon, and Wick~\cite{Gethner} showed that steps of length $\sqrt{26}$ or less are insufficient to reach infinity. In the same paper they showed that it is impossible to walk to  infinity on any line in the complex plane by stepping only on Gaussian primes and taking steps of bounded length, and thus established the Gaussian analog of the classical result that there are arbitrarily long sequences of composites on the real line. In 2017, West and Sittinger \cite{West} generalized 
this result and showed that in any quadratic field of class number 1, it is similarly impossible  to walk to infinity along any line using steps of bounded length and stepping only on primes in the ring of integers of the field. Motivated by these results, we further investigate the idea of walking to infinity on lines in the complex plane stepping only on Gaussian integers,  and  analogies to walking to infinity along the real line.

Recall that the ring $\Z [i]$ of {\em Gaussian integers} consists of all complex numbers of the form $\alpha = a+bi$, where $a$ and $b$ are rational integers.  Following Gethner et al., we call a line  in the complex plane  a {\em{Gaussian line}} if it contains two, and hence infinitely many, Gaussian integers. We call a Gaussian line {\em{primitive}} if the integers on the line do not all share a common divisor.  With these definitions, we ask what you might discover if instead of wandering freely on Gaussian integers in the complex plane,  you walked along a primitive Gaussian line stepping only on Gaussian integers?  How different or similar would this stroll to infinity be to that of walking  to infinity along the real line stepping only on rational integers?  Would you stroll on infinitely many Gaussian primes, or perhaps none at all? Could you observe an analog of Bertrand's postulate on your walk? Would you see a  periodicity of divisibility similar to that on the real line? What other properties of the Gaussian integers might you discover?

An overview of the paper and our results is as follows.  In Section \ref{background}, we establish the background and notation used throughout.
 In Section \ref{primes}, we investigate the distribution of Gaussian primes on Gaussian lines.  We discuss what a theorem of T. Tao says about primes on Gaussian lines and formulate and computationally support an extension of  Bertrand's Postulate to these lines. The main questions posed in this section  are equivalent to famous open problems about quadratic polynomials representing prime numbers, so we turn to more tractable problems in subsequent sections. In Section~\ref{div}, we prove key divisibility properties of Gaussian integers on Gaussian lines that are important for the rest of the paper.   This includes an analogy of the periodicity of divisibility of rational integers on the real line and a characterization of  the rational  integers and Gaussian primes  that divide some Gaussian integer on a given Gaussian line.  In Section~\ref{CRT}, we extend the Chinese Remainder Theorem  to Gaussian lines and prove a theorem that shows there are always infinitely many Gaussian lines that satisfy any given CRT-type divisibility properties.  
% In Section~\ref{sets}, we ask when a set of consecutive Gaussian integers on a primitive Gaussian line is guaranteed to contain at least one integer that is relatively prime to all the others. We prove that this property holds on any primitive line if the set contains six or fewer integers, but for  $k\geq 7$ there exist infinitely many primitive lines that contain a set of $k$  consecutive Gaussian integers that do not have this property. It is plausible, however, that  for every $k\geq 7$ there is a primitive Gaussian line $L$ such that every set of $k$ or fewer consecutive integers on $L$ has this property, and we prove that this is indeed the case for $k\leq \numprint{260,000}$.
 Finally, in Section \ref{divisor} we return to questions raised in Section \ref{div} and completely characterize the set of Gaussian integers that divide some Gaussian integer on a given Gaussian line.

\section{Background and Notation.}\label{background}

We begin with some background on Gaussian integers and by establishing the notation concerning Gaussian lines that is used throughout the paper.   

\medskip

The unit group of the Gaussian integers $\Z[i]$ is
$\{\pm 1, \pm i\}$, so  two Gaussian integers, $\alpha$ and $\beta$, are {\em associates} if and only if $\alpha=\pm \beta$ or $\alpha = \pm i \beta$.
The {\em norm} of the Gaussian integer $\alpha=a+bi$ is defined by $N(a+bi)=\alpha\cdot\overline\alpha =a^2+b^2\in\Z$, where the ``bar'' denotes complex conjugation, and its {\em trace} is defined by $Tr(a+bi)=\alpha+\overline\alpha=2a\in\Z$.  Unique factorization holds in $\Z[i]$, and this gives the Gaussian integers a well-defined notion of primality. To avoid confusion, we use the terminology {\em rational prime} for a prime in the rational integers $\Z$, and {\em Gaussian prime} for a prime in $\Z [i]$. 

The Gaussian primes can be classified in terms of the factorization of the rational primes $p\in \Z$ into Gaussian primes as follows:

     \begin{center}
     \begin{enumerate}
	 \item If $p=2$, then $p$ ramifies in $\Z [i]$. Specifically, $2=-i(1+i)^2$, so $1+i$ is a Gaussian prime of norm 2.
          \item If $p \equiv 1 \pmod 4$, then $p = \pi\cdot\overline{\pi}$ splits as a product of two conjugate Gaussian primes of norm $p$ that are not associates in $\Z[i]$.
          \item If $p \equiv 3 \pmod 4$, then $p$ remains prime in $\Z [i]$ and has norm $p^2$.\\
     \end{enumerate}
     \end{center}
Every Gaussian prime is an associate of one of the Gaussian primes described above.
If $\pi$ is a Gaussian prime then we say $\pi$ \textit{lies over} $p$ if $\pi$ divides the rational prime $p$.
%We call a Gaussian integer \textit{even} if it is divisible by $1+i$ and \textit{odd} otherwise.
%Two Gaussian integers are {\em relatively prime} if they only have unit factors in common.

For every Gaussian line $L$, we distinguish two Gaussian integers, $\ao=a+bi$   and $\delta=c+di$, that define $L$ as follows. Let $\ao$ be the Gaussian integer on $L$ of minimum norm, and if there are two such integers, let $\ao $ be the one with the larger real part.  If $L$ is vertical, then take $\delta=i$.  Otherwise, let $\alpha_1$ be the Gaussian integer on $L$ closest to $\ao$ (so $N(\alpha_1-\ao)$ is minimal) and with $\re(\alpha_1)>\re(\ao)$.  Then take $\delta=\alpha_1-\ao$.  Thus $\alpha_0$ is on the line $L$, but  $\delta$ is not, provided $\ao\neq 0$.  Note that there are only two primitive Gaussian lines with $\ao=0$, namely the real line $\im(z)=0$ and the imaginary line $\re(z)=0$.

With $\ao$ and $\delta$  defined in this way, the  lemma below describes all Gaussian integers on $L$.  This lemma is essentially  Lemma 4.2 in \cite{Gethner}, except that we describe the primitive case and specify $\ao$ and $\delta$,  since this is convenient for our work.

\begin{lemma}\label{notation}  Let $L$ be a Gaussian line, and let $\ao=a+bi$ and $\delta=c+di$ be as defined above.  Then $c$ and $d$  are  relatively prime, $c\geq 0$, and  the Gaussian integers  on $L$ are exactly the Gaussian integers $\an$ given by
$$ \an=\ao+\delta n,\  n\in\Z.\label{an}$$
Moreover, $L$ is primitive if and only if $\ao$ and $\delta$ are relatively prime over $\Z[i]$.
\end{lemma}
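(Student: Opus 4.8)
The plan is to show that the Gaussian integers on $L$ form a single coset $\ao+H$ of an infinite cyclic subgroup $H\le\Z[i]$, then to identify $\delta$ with a generator of $H$, and finally to read off the three assertions from this description. Set
$H=\{\beta-\ao:\beta\in\Z[i],\ \beta\in L\}$.
If $\beta_1,\beta_2\in\Z[i]\cap L$, then since $\ao,\beta_1,\beta_2$ all lie on the line $L$, the affine combination $\beta_1+\beta_2-\ao$ again lies on $L$, and it lies in $\Z[i]$; likewise $2\ao-\beta_1\in\Z[i]\cap L$. Hence $H$ is a subgroup of $\Z[i]\cong\Z^2$. Being contained in the line through the origin parallel to $L$, it is a discrete subgroup of that line, so $H=\{0\}$ or $H=\Z\eta$ for some $\eta\neq0$; and $H\neq\{0\}$ because a Gaussian line carries at least two Gaussian integers, one of which is $\ao$. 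Conversely, if $\gamma\in\Z[i]$ is any real multiple of $\eta$, then $\ao+\gamma$ lies on $L$ and in $\Z[i]$, so $\gamma\in H$; thus $H$ is exactly the set of Gaussian integers parallel to $L$, and $\Z[i]\cap L=\ao+\Z\eta$.

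Next I would match $\delta$ with a suitable choice of $\eta$. If $L$ is vertical, then $\Z[i]\cap L=\{a+ki:k\in\Z\}$ with $a=\re(\ao)\in\Z$, so $H=\Z i$ and $\delta=i$ generates $H$; here $c=0$. If $L$ is not vertical, then $\re(\eta)\neq0$, and after possibly replacing $\eta$ by $-\eta$ we may assume $\re(\eta)>0$; then the Gaussian integers on $L$ with real part larger than $\re(\ao)$ are precisely the $\ao+k\eta$ with $k\geq1$, and $N(k\eta)=k^2N(\eta)$ is minimized uniquely at $k=1$, so the closest of them to $\ao$ is $\alpha_1=\ao+\eta$, giving $\delta=\alpha_1-\ao=\eta$. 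In either case $\delta$ generates $H$, so the Gaussian integers on $L$ are exactly the $\an=\ao+\delta n$ with $n\in\Z$, and $c=\re(\delta)\geq0$. Finally, if $g=\gcd(c,d)$, then $\delta/g=(c/g)+(d/g)i$ is a Gaussian integer that is a real multiple of $\delta$, hence lies in $H=\Z\delta$, which forces $1/g\in\Z$ and so $g=1$.

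For the last assertion, observe that a Gaussian integer divides every $\ao+\delta n$ if and only if it divides $\ao$ (the case $n=0$) and $\alpha_1-\ao=\delta$; hence the Gaussian integers on $L$ have exactly the common divisors of $\ao$ and $\delta$. So they share a non-unit common divisor precisely when $\gcd(\ao,\delta)$ is a non-unit, i.e. $L$ is primitive if and only if $\ao$ and $\delta$ are relatively prime over $\Z[i]$. The argument is almost entirely affine-lattice bookkeeping; the only points that need a little care are checking that $H$ is the \emph{full} group of Gaussian integers parallel to $L$ (this is what makes $\gcd(c,d)=1$ come out and guarantees that $\delta$ is a genuine generator rather than a proper multiple of one) and treating the vertical line as a separate case, and much of this already appears as Lemma 4.2 of \cite{Gethner}.
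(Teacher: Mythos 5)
Your proof is correct, but it takes a different route from the paper's. The paper works directly with $\delta$: it first deduces $\gcd(c,d)=1$ from the minimality in the definition of $\alpha_1$ (a common factor $g>1$ would make $\ao+\delta/g$ a Gaussian integer on $L$ strictly between $\ao$ and $\alpha_1$), and then, for an arbitrary Gaussian integer $\beta$ on $L$, writes $\beta=\ao+r\delta$ with $r=(\beta-\ao)/\delta\in\Q(i)\cap\R=\Q$ and uses $rc,rd\in\Z$ together with $\gcd(c,d)=1$ to force $r\in\Z$. You instead prove the structural statement first: the Gaussian integers on $L$ form a coset $\ao+H$ where $H$ is the (discrete, hence cyclic) group of Gaussian integers parallel to $L$; you then identify $\delta$ with a generator $\eta$ of $H$ via the norm-minimizing definition of $\alpha_1$, and obtain $\gcd(c,d)=1$ as a corollary of $\delta$ being a genuine generator (since $\delta/g\in H=\Z\delta$ forces $g=1$). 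The logical order is thus reversed, and your key tool is the classification of discrete subgroups of a line rather than the paper's rationality-of-$r$ argument in $\Q(i)$. The primitivity equivalence is handled essentially identically in both (common divisors of all $\alpha_n$ are exactly the common divisors of $\ao$ and $\delta$). The paper's argument is a bit shorter; yours makes explicit the lattice picture (no Gaussian integers strictly between consecutive $\alpha_n$) and would transfer verbatim to lines in the ring of integers of any imaginary quadratic field, at the cost of the extra bookkeeping needed to check that $H$ is the full group of Gaussian integers parallel to $L$ — a step you correctly flag and carry out, and without which the $\gcd(c,d)=1$ conclusion would not follow.
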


\begin{proof}  If $L$ is vertical then $\delta=i$ and  $\ao=k$ for some $k\in\Z$.  Then the Gaussian integers on $L$ are given by $\an=k+ni, n\in\Z$, $L$ is primitive, and  $\ao$ and $\delta$ are relatively prime.  Thus, the lemma holds for all vertical Gaussian lines.

If  $L$ is not vertical, then by our choice of $\delta=c+di$ we have $c> 0$  and $L$ has slope $d/c$. Thus $c$ and $d$ must be relatively prime since otherwise there would be a Gaussian integer on $L$ between $\ao$ and $\alpha_1$, contradicting our choice of $\alpha_1$.  Let $\beta$ be a Gaussian integer on $L$. Then $\beta=\ao+r\delta$ for some real number $r$.  But, $r=(\beta-\ao)/\delta$ is in the quotient field $\Q(i)$, so $r\in\Q$.  Now $r\delta=rc+rdi=\beta-\ao\in\Z[i]$ implies $rc, rd\in\Z$.  Since $c$ and $d$ are relatively prime, it follows that $r\in\Z$ as needed.

For the second part of the lemma, first suppose $\ao$ and $\delta$  have a common Gaussian prime divisor $\pi$.  Then $\pi$ divides $\ao+\delta n$ for all $n\in\Z$, $i.e.$, $\pi$ divides all Gaussian integers $\alpha_n$ on $L$ and $L$ is not primitive.  Conversely, if $\ao$ and $\delta$ are relatively prime, then $\ao$ and ${\alpha_1}=\ao+\delta$ are also relative prime, and $L$ must be primitive since it contains at least two Gaussian integers that do not share a common divisor. \end{proof}

Throughout this paper, we define a Gaussian line $L$ by its values of 
$\alpha_0$  and $\delta$ as given in Lemma \ref{notation}. Given these values, we also define a rational integer $\Delta$ associated to $L$ by 
\begin{equation}\label{D}\Delta=ad-bc.\end{equation}
Note that if $\alpha_n=x+yi=\ao+n\delta$, $n\in\Z$, is any other Gaussian integer on $L$, then  $x=a+nc$ and $y=b+nd$ and so
$\Delta$ can also be computed by
$\Delta=xd-yc$. That is, $\Delta$ can easily be computed from the values of $\alpha_n$ and $\delta$, not just from $\ao$ and $\delta$.  In Section \ref{div}, we use $\Delta$ to characterize the set of rational integers that divide some Gaussian integer on $L$. 
Another use of $\Delta$ is given by the following easy lemma.

\begin{lemma}\label{delta} Let  $L$ be a primitive Gaussian line. Then $\Delta=0$ if and only if $L$ is the real or imaginary line, which holds if and only if $\alpha_0=0$.\end{lemma}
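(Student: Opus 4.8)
The plan is to reduce the statement to the single equivalence $\Delta=0\iff\ao=0$, and then to invoke the remark from Section~\ref{background} that the only primitive Gaussian lines through the origin are the real line $\im(z)=0$ and the imaginary line $\re(z)=0$.

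First I would record what $\Delta$ means geometrically. Writing $\ao=a+bi$ and $\delta=c+di$, a direct computation gives $\overline{\ao}\,\delta=(ac+bd)+(ad-bc)i$, so $\Delta=\im(\overline{\ao}\,\delta)$. Since $\delta\neq 0$ (it equals $i$ when $L$ is vertical, and has $c>0$ otherwise by Lemma~\ref{notation}), dividing by $N(\delta)$ shows that $\Delta=0$ if and only if $\ao/\delta\in\R$, i.e.\ if and only if $\ao$ is a real scalar multiple of $\delta$. Viewing $L$ as the affine line $\{\,\ao+t\delta:t\in\R\,\}$ in $\mathbb{C}$ (its Gaussian integer points being $\ao+\delta n$ by Lemma~\ref{notation}), this says precisely that $\Delta=0$ if and only if the origin lies on $L$.

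Next comes the equivalence $\Delta=0\iff\ao=0$. If $\ao=0$ then $a=b=0$, so $\Delta=ad-bc=0$ immediately from the definition of $\Delta$. Conversely, suppose $\Delta=0$. By the previous paragraph the origin lies on $L$, so $0$ is a Gaussian integer on $L$; since $N(0)=0$ is as small as possible, the defining property of $\ao$ as the Gaussian integer on $L$ of minimum norm forces $\ao=0$. (Put the other way: if $\ao\neq 0$, then $0$ would be a Gaussian integer on $L$ of strictly smaller norm than $\ao$, contradicting the choice of $\ao$.) Note that this equivalence uses nothing about primitivity.

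Finally I would chain this with the fact, noted just before Lemma~\ref{notation}, that among primitive Gaussian lines exactly the real and imaginary lines have $\ao=0$; this is the only place the hypothesis that $L$ is primitive is used (for instance the non-primitive line $y=x$ also has $\ao=0$, yet $\Delta=0$ for it too, consistent with the first equivalence). Combining the two equivalences gives $\Delta=0\iff\ao=0\iff L$ is the real or imaginary line, as claimed. I do not anticipate a genuine obstacle here — the authors already flag this as easy — the only step needing a little care is making ``$\Delta=0\implies$ the origin lies on $L$'' precise and then matching it against the minimum-norm characterization of $\ao$.
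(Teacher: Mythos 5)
Your proof is correct, but it takes a genuinely different route from the paper's. The paper argues arithmetically: from $\Delta=0$ it gets $ad=bc$, uses $\gcd(c,d)=1$ to write $a=cx$, $b=dy$, deduces $\ao=x\delta$, and then invokes primitivity (coprimality of $\ao$ and $\delta$ from Lemma~\ref{notation}) to force $x=0$; so primitivity and the coprimality of $c$ and $d$ are used in the middle of the argument. You instead read $\Delta$ geometrically as $\im(\overline{\ao}\,\delta)$, conclude that $\Delta=0$ exactly when the origin lies on $L$, and then use the minimum-norm definition of $\ao$ to get $\ao=0$; primitivity enters only at the final step, via the Section~\ref{background} remark that the only primitive Gaussian lines with $\ao=0$ are the real and imaginary lines. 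Your route buys a clean geometric interpretation of $\Delta$ and the slightly stronger observation that $\Delta=0\iff\ao=0$ holds for every Gaussian line, primitive or not, while the paper's version stays within elementary divisibility bookkeeping and never appeals to the min-norm characterization. Two small remarks: citing the Section~\ref{background} note is fair (the paper's own proof ends by implicitly using the same fact), but if you want full self-containment it follows in one line from Lemma~\ref{notation}: $\ao=0$ and $\gcd(\ao,\delta)=1$ force $\delta$ to be a unit, hence $\delta=1$ or $i$. Also, the minimum-norm step is airtight because $N(0)=0$ is attained only at $0$, so no tie-breaking by real part is needed.
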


\begin{proof} The only part of the lemma that doesn't follow directly from the definitions is the fact that if $\Delta=0$ then $L$ is the real or imaginary line. For this assume $\Delta=0$, so
 $ad=bc$.  Since $c$ and $d$  are relatively prime, it follows that $c\mid a$ and $d\mid b$.  Thus, $a=cx$ and $b=dy$ for some $x,y\in\Z$.  This gives $cdx=cdy$.  We may assume $c$ and $d$ are both nonzero since otherwise $a$ or $b$ is equal to zero and $L$ is the real or imaginary line.  Thus, it follows that $x=y$ and $\ao=x\delta$.  Hence, $x=0$ since $\ao$ and 
$\delta$ are relatively prime and $\delta\neq 0$.  Therefore, $\ao=0$, and $L$ is either the real or imaginary line.
\end{proof}

%Primes on Gaussian Lines
\section{Primes on Gaussian Lines.}\label{primes} One of the first questions we had when we began our study of Gaussian lines was about the distribution of Gaussian primes  on these lines.  We wondered whether every primitive Gaussian line contains infinitely many  Gaussian primes, or if the existence of even one prime is guaranteed.   This led us to consider what T.~Tao's~\cite{Tao} beautiful theorem about arbitrarily shaped constellations in the Gaussian primes says about primes on Gaussian lines, and to formulate and computationally support an analog of Bertrand's Postulate to Gaussian lines. 
\medskip

The real and imaginary lines contain infinitely many primes, so it is natural to wonder whether every primitive Gaussian line similarly contains infinitely many Gaussian primes.  Finding even one other primitive Gaussian line that contains infinitely many Gaussian primes is a  very difficult problem, however, since this is equivalent (by taking norms) to finding a quadratic polynomial that takes on infinitely many rational prime values and no such polynomials are known. For example, determining whether or not there are infinitely many Gaussian primes on the Gaussian line with 
$\alpha_0=1$ and $\delta=i$ ($i.e.$ Gaussian primes of the form $\an=1+ni$) is equivalent to determining whether or not there are infinitely many rational primes of the form $1+n^2$, which is
Landau's fourth problem given at the 1912 International Congress of Mathematicians and remains open today.   In general, it is also not known whether every irreducible quadratic polynomial attains at least one prime value, so similarly we cannot easily decide whether every primitive Gaussian line contains at least one Gaussian prime.

%some heuristic arguments suggest that any irreducible polynomial $f(x)$ with positive leading coefficient and such that the values $f(n)$ and $f(m)$ are relatively prime for some $n$ and $m$, represents primes infinitely often.  Not a single example of a  polynomial of degree greater than one that takes on infinitely many prime values is known, however, and finding one remains one of the major unsolved problems in number theory. 
%
%More generally, in 1854, Bunyakovsky  formulated the now famous conjecture that any irreducible polynomial $f(x)$ with positive leading coefficient and such that the values $f(n)$ and $f(m)$ are relatively prime for some $n$ and $m$as $n$ represents primes infinitely often.  Not a single example of a  polynomial of degree greater than one that takes on infinitely many prime values is known, however, and the Bunyakovsky's Conjecture remains one of the major unsolved problems in number theory. Although the conjecture predicts that every primitive Gaussian line contains infinitely many Gaussian primes, we can't even prove that every primitive Gaussian line contains at least one Gaussian primes, and so leave this challenge to the reader.

Despite the difficulty of finding a Gaussian line that contains infinitely many Gaussian primes, we can apply a result of Iwaniec and Lemke Oliver  to prove that infinitely many Gaussian lines contain infinitely many elements that are the product of at most two Gaussian primes.  
 For example,  it is a deep theorem of Iwaniec \cite{Iwaniec} 
 that there are infinitely many values of $n$ such that $1+n^2$ is the product of at most two rational primes, from which it is immediate that  the vertical Gaussian line defined by $\alpha_0=1$ and $\delta =i$ contains infinitely many elements that are the product of at most two Gaussian primes.   Iwaniec notes that his proof generalizes to show that 
  if $G(n)=An^2+Bn+C$ is an irreducible polynomial with $A>0$ and $C$ odd, then there exist infinitely many integers $n$ such that $G(n)$ has at most two rational prime factors. This theorem also follows from a result of Lemke Oliver~\cite{Oliver}  generalizing Iwaniec's work. Applied to Gaussian lines, this result yields the following:

\begin{theorem}\label{2primes}
Let $L$ is a primitive Gaussian line such that $1+i $ does not divide $\ao$.  Then $L$ contains infinitely Gaussian integers that are the product of at most two Gaussian primes.  
\end{theorem}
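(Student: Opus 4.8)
The plan is to reduce the statement about Gaussian lines to the quoted theorem of Iwaniec and Lemke Oliver about the quadratic polynomial $G(n)=An^2+Bn+C$ by taking norms. By Lemma \ref{notation}, the Gaussian integers on $L$ are exactly $\alpha_n=\alpha_0+\delta n$ with $\alpha_0=a+bi$, $\delta=c+di$, and $\gcd(c,d)=1$. A Gaussian integer $\alpha_n$ that is not divisible by $1+i$ and whose norm $N(\alpha_n)$ has at most two rational prime factors must itself be a product of at most two Gaussian primes: writing $N(\alpha_n)=\alpha_n\overline{\alpha_n}$, each rational prime factor of $N(\alpha_n)$ contributes at most... actually one must be a little careful, so the first step is to record the elementary fact that if $\beta\in\Z[i]$ is not divisible by $1+i$ and $N(\beta)$ is a product of at most two (not necessarily distinct) rational primes, then $\beta$ is a product of at most two Gaussian primes. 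This uses the classification of Gaussian primes in Section \ref{background}: a rational prime $p\equiv 3\pmod 4$ is itself a Gaussian prime and contributes $p^2$ to the norm, so if it divided $\beta$ then $N(\beta)$ would already be divisible by $p^2$ and that would exhaust the two prime factors; a rational prime $p\equiv 1\pmod 4$ splits, and for each such $p$ dividing $N(\beta)$ exactly one of $\pi,\overline\pi$ can divide $\beta$ to the first power (else $p\mid\beta$ and again $p^2\mid N(\beta)$); and $2$ is excluded since $1+i\nmid\beta$. Hence the number of Gaussian prime factors of $\beta$ (with multiplicity) equals $\Omega(N(\beta))$ when $1+i\nmid\beta$... let me be more cautious and just claim $\le$, which is all we need.

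Next I would compute $N(\alpha_n)$ as a quadratic polynomial in $n$. We get
$$N(\alpha_n)=(a+cn)^2+(b+dn)^2=(c^2+d^2)n^2+2(ac+bd)n+(a^2+b^2),$$
so $G(n)=N(\alpha_n)$ has $A=c^2+d^2=N(\delta)>0$, $B=2(ac+bd)$, and $C=a^2+b^2=N(\alpha_0)$. To invoke the Iwaniec--Lemke Oliver theorem I must check two hypotheses: that $G$ is irreducible over $\Q$ (equivalently $\Z$), and that $C=N(\alpha_0)$ is odd.

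For irreducibility, $G(n)$ factors over $\Q$ if and only if it has a rational (hence integer, after scaling) root, i.e. iff $\alpha_0+\delta n=0$ for some $n\in\Q$, i.e. iff $\alpha_0$ is a rational multiple of $\delta$. Since $L$ is primitive, Lemma \ref{notation} gives that $\alpha_0$ and $\delta$ are relatively prime in $\Z[i]$; if $\alpha_0=r\delta$ with $r\in\Q$ then (clearing denominators exactly as in the proof of Lemma \ref{notation}) $r\in\Z$, so $\delta\mid\alpha_0$, forcing $\delta$ to be a unit. But $\delta=i$ gives the imaginary line and $\delta=1$ gives the real line, and in both those cases $\alpha_0=0$ — these are exactly the two lines with $1+i\mid\alpha_0$ (indeed $1+i\mid 0$), so they are excluded by hypothesis. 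Hence for the lines under consideration $G$ is irreducible. For the parity condition, $C=a^2+b^2$ is even precisely when $a\equiv b\pmod 2$, which happens precisely when $1+i\mid\alpha_0$ (since $N(1+i)=2$ and $1+i\mid a+bi\iff 2\mid a^2+b^2\iff a\equiv b\pmod 2$); our hypothesis $1+i\nmid\alpha_0$ makes $C$ odd, as required.

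With both hypotheses verified, the Iwaniec--Lemke Oliver theorem produces infinitely many $n\in\Z$ with $G(n)=N(\alpha_n)$ having at most two rational prime factors. Discarding the at most... well, I should note that among these $n$ only finitely many could have $1+i\mid\alpha_n$: since $1+i\mid\alpha_n$ forces $2\mid N(\alpha_n)$ and then the ``at most two prime factors'' budget is nearly used up, but more simply, $1+i\mid\alpha_n=\alpha_0+\delta n$ is a congruence condition on $n$ modulo $1+i$, and since $1+i\nmid\alpha_0$ while we may check $1+i\nmid\delta$ (as $\gcd(c,d)=1$ forces $c,d$ not both even, so $N(\delta)=c^2+d^2$ is... hmm, could be even if $c,d$ both odd). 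In fact if $1+i\mid\delta$ then the residue $\alpha_n\bmod(1+i)$ is the constant $\alpha_0\bmod(1+i)\neq 0$, so $1+i\nmid\alpha_n$ for every $n$; and if $1+i\nmid\delta$ then $1+i\mid\alpha_n$ for exactly one residue class of $n$ modulo $2$, which still leaves infinitely many good $n$. Either way, infinitely many of the $n$ furnished by the theorem satisfy $1+i\nmid\alpha_n$, and for each such $n$ the elementary fact from the first paragraph shows $\alpha_n$ is a product of at most two Gaussian primes. This gives infinitely many such $\alpha_n$ on $L$, completing the proof.

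The only genuinely delicate point — the ``main obstacle'' in the sense of something one must get exactly right rather than cite — is the elementary lemma translating ``$\Omega(N(\beta))\le 2$ and $1+i\nmid\beta$'' into ``$\beta$ is a product of at most two Gaussian primes.'' The subtlety is that a split prime $p\equiv 1\pmod 4$ contributes a factor $p$ to the norm for each of $\pi$ and $\overline\pi$ dividing $\beta$, so one Gaussian prime factor of $\beta$ uses up one rational prime factor of $N(\beta)$, whereas an inert prime $p\equiv 3\pmod 4$ is one Gaussian prime but contributes $p^2$; the claim goes through because in the inert case the single Gaussian prime $p$ already accounts for two rational prime factors of $N(\beta)$, so the count of Gaussian prime factors never exceeds $\Omega(N(\beta))$ once the ramified prime $2$ is removed by the hypothesis $1+i\nmid\beta$. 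Everything else is bookkeeping with the definitions from Sections \ref{background} and the congruence-counting already rehearsed above.
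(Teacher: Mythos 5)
Your overall route is the same as the paper's: express $N(\alpha_n)$ as the quadratic $f(n)=N(\delta)n^2+2(ac+bd)n+N(\alpha_0)$, check the hypotheses of the Iwaniec--Lemke Oliver theorem (positive leading coefficient, irreducibility, odd constant term --- the last being exactly where $1+i\nmid\alpha_0$ enters), and conclude. Your irreducibility argument via rational roots is a correct variant of the paper's computation $\mathrm{Disc}(f)=-4\Delta^2$ combined with Lemma \ref{delta}; just note that the phrases ``$\delta=1$ gives the real line'' and ``$\delta=i$ gives the imaginary line'' are only true because you are in the case $\alpha_0=r\delta$ (in general $\delta=1$ only says the line is horizontal, e.g.\ $\im(z)=1$), and ``these are exactly the two lines with $1+i\mid\alpha_0$'' should be weakened to the one implication you actually use.

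The one problematic step is the final maneuver where you discard the $n$ with $1+i\mid\alpha_n$ and assert that infinitely many of the $n$ furnished by the theorem remain. The quoted theorem gives no control over the residue class of $n$ modulo $2$, so knowing that the bad $n$ form a single class mod $2$ does not by itself guarantee that infinitely many of the produced $n$ survive; a priori they could all be bad. Fortunately the whole restriction is unnecessary: every Gaussian prime has norm at least $2$, so each Gaussian prime factor of $\beta$ (counted with multiplicity) contributes at least one rational prime factor to $N(\beta)$, giving $\Omega_{\Z[i]}(\beta)\le\Omega(N(\beta))$ with no hypothesis on divisibility by $1+i$. Hence any $\alpha_n$ whose norm is a product of at most two rational primes is automatically a product of at most two Gaussian primes --- concretely, if $1+i\mid\alpha_n$ and $N(\alpha_n)=2q$ with $q$ prime, then $\alpha_n$ is a unit times $(1+i)\pi$ with $N(\pi)=q$, still a product of two Gaussian primes. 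This unconditional version of your ``elementary fact'' is exactly how the paper passes from Iwaniec's conclusion to the statement of the theorem; deleting the parity-discarding step and using it instead makes your proof complete and essentially identical to the paper's.
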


\begin{proof} Let $L$ be a primitive Gaussian line with $\ao=a+bi$, $\delta=c+di$, and $\Delta=ad-bc$ as defined in (\ref{D}). Assume $1+i$ does not divide $\ao$.  
The norm of an arbitrary Gaussian integer $\alpha_n$ on $L$ can be viewed as a quadratic polynomial $f(n)$ as follows:
\begin{align}\label{N} f(n)=N(\alpha_n)=N(\alpha_0+\delta n)
&=N(\delta)n^2+Tr(\alpha_0\overline\delta)n+N(\alpha_0)\\
&=(c^2+d^2)n^2+2(ac+bd)n+a^2+b^2\nonumber.
\end{align}
The discriminant  of $f$ is equal to $-4\Delta^2$, which is negative unless $\Delta=0$.    Thus, $f$ is irreducible over $\Z$ unless $L$ is the real or imaginary line, by Lemma \ref{delta}. Moreover, the leading coefficient  of $f$ is positive and the constant term $N(\alpha_0)$ is odd, since we are assuming $1+i$ does not divide $\ao$.  It follows from  Iwaniec's theorem discussed above that  there are infinitely many $n$ such that $f(n)=N(\alpha_n)$ is a product of at most two rational primes, $i.e.$,  $\alpha_n$ is a product of at most two Gaussian primes.
\end{proof}

Unfortunately Theorem \ref{2primes} does not say anything about the distribution of Gaussian primes on Gaussian lines.  For this, we first apply T. Tao's \cite{Tao} astonishing theorem about arbitrarily shaped constellations in the Gaussian primes to Gaussian lines.  His theorem says the following:

\begin{theorem}[T. Tao \cite{Tao}] Given any distinct Gaussian integers $v_1,\ldots,v_{k}$, there are infinitely many sets $\{\alpha+rv_1,\ldots, \alpha+rv_{k}\}$, with $\alpha\in\Z[i]$ and $r\in\Z\setminus \{0\}$, all of whose elements are Gaussian primes.\end{theorem}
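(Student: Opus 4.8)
The plan is not to reprove this statement --- it is the Gaussian analogue of the Green--Tao theorem and is the main result of \cite{Tao} --- but to indicate how its proof is organized, since it lies far deeper than anything else in this paper and we will only invoke it. Identify $\Z[i]$ with $\Z^2$, think of $\{v_1,\dots,v_k\}$ as a prescribed ``shape,'' and of a constellation as a homothetic copy $\{\alpha+rv_1,\dots,\alpha+rv_k\}$ of it; the theorem is then the two-dimensional, arbitrary-shape refinement of ``the primes contain arbitrarily long arithmetic progressions.'' Accordingly I would follow the Green--Tao architecture, with four ingredients: a $W$-trick to kill local obstructions, a transference (relative Szemer\'edi) principle, the multidimensional Szemer\'edi theorem of Furstenberg and Katznelson as the unweighted combinatorial input, and a Goldston--Yildirim-type pseudorandom majorant for the Gaussian almost-primes.

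First I would perform the $W$-trick. Fix a slowly growing parameter $w=w(N)$, let $W$ be the product of the rational primes up to $w$, and restrict attention to Gaussian integers lying in a single invertible residue class modulo $W$. Using the description of how rational primes factor in $\Z[i]$ recorded in Section~\ref{background} --- the split primes $p\equiv 1\pmod 4$, the inert primes $p\equiv 3\pmod 4$, and the ramified prime $2$ --- one checks that this removes the small-prime obstructions and makes the Gaussian primes behave, on the scale $N$, like a set of positive density relative to a suitable measure.

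Second, and this is the technical heart, I would construct that measure. Following Goldston--Yildirim and Green--Tao, one builds a weight $\nu$ on the Gaussian integers in a box $[-N,N]^2$ from truncated divisor sums over $\Z[i]$, normalized to have mean $1$, so that a fixed constant multiple of $\nu$ dominates the normalized indicator of the Gaussian primes on a set of positive density. The work then is to verify the \emph{linear forms condition} and the \emph{correlation condition} for $\nu$ relative to the finitely many maps $(\alpha,r)\mapsto\alpha+rv_j$; this requires sieve asymptotics in $\Z[i]$ together with the local analysis at each rational prime, and it is here --- not in the combinatorics --- that the principal difficulty lies. Granting the pseudorandomness of $\nu$, the relative Szemer\'edi theorem (whose unweighted core is the Furstenberg--Katznelson statement that every positive-density subset of $\Z^2$ contains a homothetic copy of any given finite set) applies and yields, for all large $N$, a number of in-prime constellations $\{\alpha+rv_1,\dots,\alpha+rv_k\}$ that grows with $N$. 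In particular there is at least one such constellation, and letting $N\to\infty$ produces infinitely many, with $\alpha\in\Z[i]$ and $r\in\Z\setminus\{0\}$, as claimed. For the purposes of this paper we take this result as a black box from \cite{Tao}.
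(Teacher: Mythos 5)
Your proposal handles this exactly as the paper does: the result is Tao's theorem, quoted from \cite{Tao} and used as a black box, with no proof given in the paper itself. Your sketch of the proof architecture is a reasonable summary of the transference strategy (though Tao's own argument runs through a relative hypergraph removal lemma, from which the Furstenberg--Katznelson input is derived, rather than a relative Szemer\'edi theorem per se), but since neither you nor the paper is proving the statement, nothing further is required.
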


By choosing $\delta=c+di\in\Z[i]$ with $\gcd(c,d)=1$ as usual, we can apply Tao's theorem with $v_1=\delta$, $v_2=2\delta,\ldots,v_{k}=k\delta$. The theorem guarantees the existence of infinitely many pairs $(\alpha, r)$ such that all the elements in the set  
$$P_{\alpha, r}=\{\alpha+r\delta,\alpha+2r\delta,\ldots, \alpha+kr\delta\}$$
are Gaussian primes. For each $\alpha$, there is a primitive Gaussian line $L_\alpha$ with slope $m=d/c$ ($i.e.$, $\delta=c+di$) that passes through all the elements in $P_{\alpha, r}$.  Thus, $L_\alpha$ contains $k$ Gaussian primes in arithmetic progression.  It is possible that infinitely many of the sets $P_{\alpha, r}$ are actually on the same Gaussian line (that is, infinitely many of the lines $L_\alpha$ have the same $\ao$).  In this case, we thus have a Gaussian line that contains infinitely many Gaussian primes.  It  follows that for a fixed slope $m\in\Q$, either there is a Gaussian line with slope $m$ that contains infinitely many Gaussian primes or, for all $k\geq 1$, there are infinitely many Gaussian lines with slope $m$ that contain $k$ Gaussian primes in arithmetic progression.  Considering this for all $m$  and excluding the real and imaginary lines (the case $\ao=0$),  gives the following:

\begin{corollary} At least one of the following two statements is true:
\begin{enumerate}
\item There is a Gaussian line with $\alpha_0\neq 0$ that contains infinitely many Gaussian primes.
\item For every rational integer $m$ and every positive integer $k$, there are infinitely many distinct Gaussian lines with slope $m$ that contain $k$ Gaussian primes in arithmetic progression.
\end{enumerate}
\end{corollary}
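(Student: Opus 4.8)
The plan is to derive the corollary from Tao's theorem by a pigeonhole argument run one slope at a time; no arithmetic input beyond Tao's theorem is needed, and the only delicate point is to make sure the line one finally produces is not the real or imaginary line. First I would fix a rational integer $m$ and set $\delta=1+mi$; since $\gcd(1,m)=1$, this is the step attached by Lemma~\ref{notation} to every Gaussian line of slope $m$, and each such line is determined by its base point $\ao$. For each $k\ge 3$, I would apply Tao's theorem with $v_j=j\delta$ for $j=1,\dots,k$ (these are distinct since $\delta\neq 0$), obtaining infinitely many pairs $(\al,r)\in\zi\times(\Z\setminus\{0\})$ for which $\al+r\delta,\al+2r\delta,\dots,\al+kr\delta$ are all Gaussian primes. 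This progression has common difference $r\delta$, so its terms all lie on the single Gaussian line $L_\al=\{\al+s\delta:s\in\R\}$ of slope $m$, which depends only on $\al$, not on $r$.

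Next comes the dichotomy. Fix $k\ge 3$ and look at the assignment $(\al,r)\mapsto L_\al$ on the infinitely many pairs above. If its image is infinite, then infinitely many distinct Gaussian lines of slope $m$ each contain $k$ Gaussian primes in arithmetic progression. If its image is finite, then some line $L$ of slope $m$ is hit by infinitely many pairs; distinct $k$-term progressions have distinct term-sets up to reversal, so infinitely many of these pairs place pairwise distinct $k$-element sets of Gaussian primes on $L$, and were the union of all these primes finite there would be only finitely many such sets — so $L$ contains infinitely many Gaussian primes. Since a $k$-term progression contains a $j$-term one for every $j\le k$, this yields, for every rational integer $m$: either (I) some Gaussian line of slope $m$ contains infinitely many Gaussian primes, or (II) for every positive integer $k$ there are infinitely many distinct Gaussian lines of slope $m$ containing $k$ Gaussian primes in arithmetic progression.

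Finally I would combine the cases. If (II) holds for every rational integer $m$, statement (2) of the corollary is proved. Otherwise (I) holds for some $m=m_0$, producing a Gaussian line $L$ of slope $m_0$ with infinitely many Gaussian primes; since infinitely many Gaussian primes cannot all be associates of one Gaussian prime, $L$ is primitive, and since $L$ has finite slope it is not the imaginary line, so Lemma~\ref{delta} gives $\ao(L)\neq 0$ as soon as $L$ is not the real line — which is automatic when $m_0\neq 0$. The step I expect to be the main obstacle is exactly the case $m_0=0$: a priori the line furnished by (I) could be the real line, which has $\ao=0$, so to reach statement (1) one must separately guarantee a non-real horizontal line carrying infinitely many Gaussian primes. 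This is the point at which the bare infinitely-many-sets form of Tao's theorem stated above needs to be used with a little more care — specifically, in a form that produces infinitely many of the required progressions on lines $\im(z)=b$ with $b\neq 0$ — after which Lemma~\ref{delta} again yields $\ao\neq 0$ and statement (1).
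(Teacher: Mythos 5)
Your route is the same as the paper's: apply Tao's theorem to the constellation $v_j=j\delta$, sort the resulting prime progressions by the line of slope $m$ on which they lie, and pigeonhole to get, for each slope, the dichotomy ``one line with infinitely many Gaussian primes, or infinitely many lines each with a $k$-term prime progression.'' In fact your write-up is more careful than the paper's own derivation on two points it treats silently: why infinitely many pairs $(\alpha,r)$ landing on one line force infinitely many distinct primes on that line (finitely many pairs per term set, and a finite set has only finitely many $k$-element subsets), and why the resulting line is primitive with $\alpha_0\neq 0$ (no non-associate worry, then Lemma~\ref{delta}).

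The one step you leave open --- the case $m_0=0$, where the pigeonholed line could be the real line, which carries infinitely many Gaussian primes but has $\alpha_0=0$ --- is a genuine soft spot, and it is precisely the point the paper waves off with the parenthetical ``excluding the real and imaginary lines.'' It can be closed without any strengthening of Tao's theorem as stated: for slope $0$ apply the theorem to the $2k$-point constellation $v_j=j$ and $v_{k+j}=j+i$, $1\leq j\leq k$. Each admissible pair $(\alpha,r)$ then produces two $k$-term progressions of Gaussian primes with common difference $r$, one on the horizontal line $\im(z)=\im(\alpha)$ and one on $\im(z)=\im(\alpha)+r$; since $r\neq 0$ these are distinct horizontal lines, so at least one of the two progressions lies on a line $\im(z)=b$ with $b\neq 0$. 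Running your pigeonhole over these non-real horizontal lines gives, for each $k$, either infinitely many distinct lines of slope $0$ containing $k$ Gaussian primes in arithmetic progression, or a single horizontal line with $\alpha_0=bi\neq 0$ containing infinitely many Gaussian primes. That settles the $m=0$ case and completes your argument, so the only missing ingredient was this modified constellation, not a stronger form of Tao's theorem.
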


Note that if the first statement in the corollary is true, then by taking norms it is also true that there is a quadratic polynomial that takes on infinitely many prime values.
Regarding the second statement, note that it is not possible for a Gaussian line to contain infinitely many Gaussian primes in arithmetic progression.  This follows from the result of Gethner et al. \hskip-.05in \cite{Gethner}  mentioned earlier that every Gaussian line contains arbitrarily long sequences of consecutive Gaussian composites.

We also wondered where to look for primes on Gaussian lines.  On the real line,  Bertrand's Postulate  guarantees  the existence of  a rational prime between $n$ and $2n$ for every rational integer $n\geq 3$. In other words, there exists a prime between  $n$ and the next integer that is  divisible by $n$. We wondered if the analogous statement holds on Gaussian lines. If $\alpha_n$ is on a Gaussian line $L$ then to characterize the next Gaussian integer on $L$ divisible by $\alpha_n$, we define a function $\nu:\Z[i]\rightarrow \Z$ by 
\begin{equation}\label{morm}
\nu(x+iy) = \frac{N(x+iy)}{{\rm gcd}(x,y)}.
\end{equation}
The function $\nu$  is useful  because if $\beta\in\Z[i]$ then the smallest positive rational integer divisible $\beta$ is $\nu(\beta)$, and furthermore, $\nu(\beta)$ divides every rational integer that is divisible by $\beta$.  For example, if $\beta = 2+6i = 2(1+3i)$ then the smallest positive rational integer divisible by $\beta$ is $2(1+3i)(1-3i)=20=\nu(\beta)$,
and $\beta$ divides a rational integer $r$ if and only if $r$ is divisible by 20.
With regards to Bertrand's postulate, if $\alpha_n$ is on a Gaussian line $L$ then  the next Gaussian integer on $L$ divisible by $\alpha_n$ is 
$\alpha_{n+\nu(\alpha_n)}=\alpha_{n}+\nu(\alpha_n)\cdot\delta$. Notice that $\nu(r)=r$ for all $r\in \Z$, so Conjecture \ref{bertrand2} below is equivalent to Bertrand's Postulate when $L$ is the real line.  We include a second conjecture because
 $\alpha_{n+N(\alpha_n)}=\alpha_n+ N(\alpha_n)\cdot\delta$ is also divisible by $\alpha_n$ and, as we discuss below, it is more efficient to use the norm when searching for Gaussian primes on lines.
Thus, we make the following two conjectures that generalize Bertrand's Postulate.

%conjecture.  In 1845, Joseph Bertrand conjectured that for every integer $n>1$ there is at least one prime $p$ such that $n<p<2n$, and he verified his conjecture for ......
\medskip
\begin{conjecture} [Strong Bertrand for Gaussian lines]  \label{bertrand2}
Let $L$ be a primitive Gaussian line.  If $n>1$,  then there is always at least one Gaussian prime on $L$ that lies between $\alpha_n$ and $\alpha_{n+\nu(\alpha_n)}$.
\end{conjecture}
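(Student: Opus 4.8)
\medskip
\noindent\emph{Proof proposal.} An unconditional proof of Conjecture~\ref{bertrand2} seems to lie beyond present methods. Applying the conjecture along a sequence of disjoint windows $[\alpha_{n_j},\alpha_{n_j+\nu(\alpha_{n_j})}]$, with $n_0=2$ and $n_{j+1}=n_j+\nu(\alpha_{n_j})+1$, produces infinitely many Gaussian primes on every primitive line $L$, hence (taking norms) forces the quadratic $f(m)=N(\alpha_m)$ to attain infinitely many prime values; for the line $\ao=1$, $\delta=i$ this is Landau's problem on primes of the form $1+m^2$, which is open. So the plan is to reduce the conjecture to a clean statement about prime values of $f$, settle what classical results reach, give the conditional theorem, and locate the obstruction.

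The reduction is immediate: a Gaussian integer of prime norm is automatically a Gaussian prime, so it suffices to find an integer $m$ with $n\le m\le n+\nu(\alpha_n)$ for which
\[
f(m)=N(\alpha_m)=(c^2+d^2)m^2+2(ac+bd)m+(a^2+b^2)
\]
is a rational prime. The key structural point to exploit is that this Bertrand window is \emph{long}, not short: assuming $L$ is not the real or imaginary line, so $\Delta\neq0$ by Lemma~\ref{delta}, the identity $xd-yc=\Delta$ noted after (\ref{D}) gives $\gcd(x,y)\mid\Delta$, a constant depending only on $L$; hence $\nu(\alpha_n)=N(\alpha_n)/\gcd(x,y)\gg_L n^2$, and as $m$ runs over the window $f(m)$ runs from $\asymp_L n^2$ up to $\asymp_L n^4$.

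I would first note that the real and imaginary lines, where $f(m)=m^2$, $\nu(\alpha_n)=n$, and the Gaussian primes on $L$ are the rational primes $\equiv3\pmod4$ up to units, give a Bertrand-type statement for a fixed residue class, which is classical (and trivial to check for small $n$). For any other primitive line, $f$ is irreducible by Lemma~\ref{delta}, and a short case check — the only delicate prime being $p=2$, which primitivity together with the parities of $c$ and $d$ disposes of — shows $f$ has no fixed prime divisor, so its singular series $\mathfrak S(f)$ is positive. The Bateman--Horn / Hardy--Littlewood heuristic then predicts
\[
\#\bigl\{\,m\in[n,n+\nu(\alpha_n)]:f(m)\text{ prime}\,\bigr\}\ \sim\ \tfrac12\,\mathfrak S(f)\sum_{m=n}^{n+\nu(\alpha_n)}\frac{1}{\log f(m)}\ \gg_L\ \frac{n^2}{\log n},
\]
so the conjecture should hold with enormous room to spare, and it follows rigorously from any sufficiently uniform lower-bound form of that conjecture (or from GRH-type input on prime values of an irreducible quadratic in short intervals).

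Unconditionally, the most one can hope to extract at present is the $P_2$ analogue — a Gaussian integer on $L$ in the window that is a product of at most two Gaussian primes. This should follow for every primitive line by feeding the relevant shift $h(\ell)=f(m+2\ell)$, chosen to have odd values (possible since, by primitivity, $1+i$ does not divide every $\alpha_m$ in the window) and with $\ell$ ranging up to $\gg_L n^2$, into the sieve behind Theorem~\ref{2primes}; the discriminant of $h$ is $-16\Delta^2$, fixed in $n$, so only a uniform version of that sieve is needed. The main obstacle is therefore not the width of the Bertrand window, which is generous, but the wall running through all of Section~\ref{primes}: there is no unconditional way to force an honest prime value of an irreducible quadratic, even with no interval constraint. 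Accordingly the realistic deliverables are Conjecture~\ref{bertrand2} for the real and imaginary lines, the full conjecture under Bateman--Horn, the $P_2$-Bertrand statement for all primitive lines, and direct numerical verification up to the range reached by the computations in this section.
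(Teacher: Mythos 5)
This statement is a conjecture, and the paper offers no proof of it: the authors only motivate it, note that establishing it for even a single line with $\alpha_0\neq 0$ would produce a quadratic polynomial with infinitely many rational prime values, and give numerical evidence --- in fact their computations verify the \emph{weak} form (Conjecture~\ref{bertrand}), precisely because $N(\alpha_n)$ is increasing in $n$ while $\nu(\alpha_n)$ is not, which makes the strong form awkward to test incrementally. Your proposal is therefore correctly calibrated: you do not claim a proof, you identify the same obstruction the paper does (unconditional prime values of an irreducible quadratic), and the positive content you add is sound and goes somewhat beyond the paper's discussion. In particular, your observation that $\gcd(x,y)\mid\Delta$ (from $xd-yc=\Delta$), so that $\nu(\alpha_n)\gg_L n^2$ and the Bertrand window is long rather than short, is correct and is a genuinely useful structural remark; so is the verification that a primitive line with $\Delta\neq0$ yields an irreducible $f$ with no fixed prime divisor (the $p=2$ case does reduce to parity of $a+b$ and $c+d$ together with primitivity), hence a positive singular series and a Bateman--Horn prediction that the window contains $\gg_L n^2/\log n$ prime norms.

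Two caveats on the deliverables you list. First, for the real and imaginary lines your reduction is to a prime $\equiv 3\pmod 4$ in the window $(n,2n)$ (the paper's own remark that the conjecture ``is Bertrand's Postulate'' on the real line is loose, since $5,13,\dots$ are not Gaussian primes); the Erd\H{o}s-type Bertrand theorem for the residue class $3\pmod 4$ handles large $n$, but the small cases are not entirely ``trivial'': under a strict reading of ``between,'' $n=3$ on the real line gives the window $(3,6)$, which contains no Gaussian prime, so either the endpoints must be included or the $\alpha_0=0$ lines need a separate convention --- worth stating explicitly. Second, your $P_2$-Bertrand statement does not follow from Theorem~\ref{2primes} as used in the paper, which only invokes the qualitative ``infinitely many $n$'' form of Iwaniec's theorem; to place an almost-prime inside a specific window $[n,n+\nu(\alpha_n)]$ you need a quantitative lower bound for the number of $P_2$ values of $f$ in dyadic ranges, uniform enough in the coefficients, which you acknowledge but should flag as the actual work. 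Likewise, your proposed numerical verification of the strong conjecture cannot reuse the paper's algorithm verbatim, since that algorithm exploits the monotonicity of $N(\alpha_n)$, which fails for $\nu(\alpha_n)$.
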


\begin{conjecture} [Weak Bertrand for Gaussian lines] \label{bertrand}
Let $L$ be a primitive Gaussian line. If $n>1$,  then there is always at least one Gaussian prime on $L$ that lies between  $\alpha_n$ and $\alpha_{n+N(\alpha_n)}$.
\end{conjecture}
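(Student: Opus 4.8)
Both Bertrand-type statements are genuinely open, and a proof is not within reach of present technology; what follows is the natural line of attack together with an explanation of exactly where it stalls. I treat Conjecture \ref{bertrand}; the argument for Conjecture \ref{bertrand2} is identical with $N(\alpha_n)$ replaced by $\nu(\alpha_n)$. The plan is to translate the assertion into a statement about prime values of the quadratic $f(n)=N(\alpha_n)$ from \eqref{N} lying in a prescribed range of the argument $n$, and then to attack that statement with sieve methods (unconditionally) or with the Bateman--Horn conjecture (conditionally).

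First I would reduce to the polynomial. If $L$ is the real or imaginary line the statement is classical: there the relevant Gaussian primes are exactly the rational primes $p\equiv 3\pmod 4$, and since $N(\alpha_n)=n^2$ the interval between $\alpha_n$ and $\alpha_{n+N(\alpha_n)}$ corresponds to the integer interval $(n,\,n+n^2)$, which contains such a prime for every $n>1$ (for large $n$ this already holds in the much shorter range $(n,2n)$ by explicit prime-number-theorem estimates in arithmetic progressions modulo $4$, and the finitely many small $n$ are checked directly). So assume $L$ is neither; then $\Delta\neq 0$ by Lemma \ref{delta}, the discriminant $-4\Delta^2$ of $f$ is negative, and $f$ is an irreducible quadratic with positive leading coefficient $c^2+d^2$ and odd or even constant term according as $1+i\nmid\ao$ or not. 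Next observe that $\alpha_m=x+yi$ is a Gaussian prime precisely when $N(\alpha_m)=f(m)$ is a rational prime, with the sole exceptions of the at most two indices $m$ at which $\alpha_m$ is real or purely imaginary (at which $\alpha_m$ is a Gaussian prime iff $f(m)=p^2$ for an inert prime $p\equiv 3\pmod 4$); since $L$ is not the real or imaginary line, $\alpha_m\neq 0$ for all $m$, so these exceptional indices are few and may be discarded, the index interval $n<m<n+N(\alpha_n)$ containing $\asymp (c^2+d^2)n^2$ integers. Thus Conjecture \ref{bertrand} is equivalent to the statement: for every such $f$ and every $n>1$, $f$ takes a rational prime value at some integer $m$ with $n<m<n+f(n)$.

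The truth of this is predicted overwhelmingly by the Bateman--Horn heuristic: the density of prime values of $f$ near $m$ is $\sim\mathfrak S(f)/\log f(m)\asymp 1/\log m$ for a positive singular series $\mathfrak S(f)$, so the expected number of prime values in $(n,\,n+f(n))$ is $\asymp f(n)/\log n\asymp (c^2+d^2)\,n^2/\log n\to\infty$ — vastly more than the single prime required. Consequently, assuming a suitable short-range form of the Bateman--Horn conjecture one obtains Conjecture \ref{bertrand} for all sufficiently large $n$, the remaining finitely many $n$ being settled by direct computation as the authors have verified. I note that plain GRH does not obviously suffice here: capturing primes among the values $f(m)$ with $m$ confined to a short range of arguments is a genuinely different problem from capturing primes in short intervals or in fixed arithmetic progressions.

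The hard part — in fact the part that lies beyond current technology — is to prove \emph{any} unconditional lower bound for prime values of $f$ in such a range, since this already subsumes the assertion that an irreducible one-variable quadratic represents at least one prime; for the line $\ao=1,\ \delta=i$ this is Landau's problem on primes of the form $m^2+1$, which is open. The best one can realistically hope to prove unconditionally is the almost-prime analogue: feeding quantitative short-interval refinements of the work of Iwaniec \cite{Iwaniec} and Lemke Oliver \cite{Oliver} already used for Theorem \ref{2primes} into the reduction above, one expects a ``Bertrand for $P_2$'' — for all sufficiently large $n$ there is, between $\alpha_n$ and $\alpha_{n+N(\alpha_n)}$, a Gaussian integer on $L$ that is a product of at most two Gaussian primes — but upgrading ``product of at most two primes'' to ``prime'' is blocked by the parity phenomenon of sieve theory, and the dramatic successes on thin prime-producing sequences (Friedlander--Iwaniec on $a^2+b^4$, Heath-Brown on $a^3+2b^3$) do not reach the one-variable quadratic case. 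I therefore do not expect a proof of Conjecture \ref{bertrand} or Conjecture \ref{bertrand2} that does not simultaneously resolve the representation of primes by quadratic polynomials.
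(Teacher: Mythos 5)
You have correctly recognized that this statement is a conjecture, not a theorem: the paper offers no proof, only computational evidence (the Sage verification over roughly $700{,}000$ lines) together with the remark that proving it for even one line with $\alpha_0\neq 0$ would produce a quadratic polynomial taking infinitely many prime values. Your reduction via norms to prime values of $f(n)=N(\alpha_n)$ in the range $n<m<n+f(n)$, the observation that this subsumes Landau-type problems (e.g.\ $m^2+1$ for $\alpha_0=1$, $\delta=i$), and the fallback to almost-prime results of Iwaniec and Lemke Oliver all match the paper's own discussion in Section 3, so your assessment of the state of the problem is accurate. One small imprecision: the conjecture is not strictly \emph{equivalent} to $f$ taking a prime value in that range, since at the at most two indices where $\alpha_m$ is real or purely imaginary the line could contain a Gaussian prime (an associate of an inert rational prime) with $f(m)=p^2$ composite; a prime value of $f$ is sufficient but its absence does not formally preclude the conjectured prime. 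This does not affect your conclusion that the conjecture is out of reach unconditionally and follows, for large $n$, from a short-range Bateman--Horn-type hypothesis.
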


\medskip
We wrote a program in Sage \cite{Sage} to search for lines $L$ where Conjecture~\ref{bertrand}  fails for some Gaussian integer on $L$.  We tested well over $10^{10}$ consecutive Gaussian integers on about $\numprint{700,000}$ lines and the conjecture held in every case.  About $\numprint{607,000}$ of the lines we checked had $\ao=1$ and $\delta=c+di$, where $c$ and $d$ were relatively prime integers ranging from one to 1,000.  Additionally, we checked over 24,000 lines where $c$ and $d$  were random integers between 300 and  $10^{18}$. Finally, we checked about 65,000 lines with $\ao\neq 1$. 

Our algorithm for testing Conjecture \ref{bertrand} relies on the fact that if $\alpha_\ell=\pi$ is a Gaussian prime between $\alpha_n$ and $\alpha_{n+N(\alpha_n)}$ for some $0<n<\ell$, then $\pi$ is also between $\alpha_k$ and $\alpha_{k+N(\alpha_k)}$ whenever $n<k<\ell$.  This holds because $N(\alpha_n)<N(\alpha_k)$ whenever $0<n<k$ by our choice of $\alpha_0$ being the element of smallest norm on $L$. The corresponding statement does not hold for $\nu(\alpha_n)$, which is why we focus on Conjecture~\ref{bertrand}.
Specifically, for every line $L$ that we tested, we found a sequence of $10^{10}$ Gaussian integers $\alpha_{\ell_i}$, $1\leq i\leq 10^{10}$, on $L$ such that the following three conditions are satisfied:

\begin{enumerate}
\item Each $\alpha_{\ell_i}$ is a Gaussian prime;
\item The first Gaussian prime  in the sequence, $\alpha_{\ell_1}$, lies between $\alpha_1$ and $\alpha_{1+N(\alpha_1)}$, $i.e.$, $1<\ell_1<1+N(\alpha_1)$;
\item For $i\geq 1$, the Gaussian prime $\alpha_{\ell_{i+1}}$ lies between the previous prime $\alpha_{\ell_i}$ and the Gaussian integer $\alpha_{{\ell_i}+N(\alpha_{\ell_i})}$ on $L$, $i.e.$, $\ell_i<\ell_{i+1}<1+N(\alpha_{\ell_i})$.
\end{enumerate}
This verifies Conjecture 2 on the line $L$ for all $1<n\leq\ell_{10^{10}}$.

If either conjecture is true, then it would follow  that there are infinitely many Gaussian primes on every Gaussian line. But proving either conjecture for even one Gaussian line (with $\ao\neq 0$) would give a Gaussian line with infinitely many Gaussian primes, and hence  a quadratic polynomial that takes on infinitely many rational prime values.

% Divisibility on Gaussian Lines
\section{Divisibility on Gaussian Lines.}\label{div}
Every second integer on the real line is divisible by 2, every third by 3, every fourth by 4, and so on. We wondered if this basic periodicity property of divisibility extends to Gaussian lines,  and furthermore, if there is a simple way to characterize those Gaussian primes that occur as divisors  on a particular Gaussian line.  
In this section we show that the answer to both of these questions is {\em YES}.

\medskip

Throughout this section, let $L$ be a primitive Gaussian line with $\alpha_0=a+bi$ and $\delta = c+di$ as defined in Section \ref{background}.   Then $\ao$ and $\delta$ are relatively prime Gaussian integers, $c$ and $d$ are relatively prime rational integers, and the Gaussian integers on $L$ are exactly the numbers $\alpha_n=\alpha_0+\delta n$, $n\in \Z$. 
Also, recall the function $\nu:\Z[i]\rightarrow \Z$ defined in (\ref{morm}) as it is used here and throughout the rest of the paper.%We use this function here and throughout the rest of this  paper.  As in Section \ref{primes}, the function $\nu$  is useful to us because the smallest positive rational integer divisible by  $\beta\in\Z[i]$ is $\nu(\beta)$, and furthermore, $\nu(\beta)$ divides every rational integer that is divisible by $\beta$.  For instance, if $\beta = 2+6i = 2(1+3i)$ then the smallest positive rational integer divisible by $\beta$ is $2(1+3i)(1-3i)=20=\nu(\beta)$,
%and $\beta$ divides a rational integer $r$ if and only if $r$ is divisible by 20.
%
%%Not true (consider 5):  $\beta$ is a Gaussian prime if and only if $\nu(\beta)$ is a rational prime.

In the special case where $L$ is the real line, we have $\alpha_0=0$, $\delta=1$, and $\alpha_n=n$, $n\in\Z$. In this case,  divisibility of integers on the line $L$ by a rational integer $r$ is periodic with period $r$. Our first theorem shows that this periodicity generalizes to arbitrary primitive Gaussian lines, specifically that divisibility by a Gaussian integer $\beta$ is periodic with period $\nu(\beta)$. Note that the periodicity of divisibility on the real line is a special case of the following theorem.

\begin{theorem} \label{periodicity}
Suppose $\beta\in\Z[i]$ divides some Gaussian integer $\alpha_t$ on $L$.  
Then  $\beta$ divides $\alpha_{n}$ if and only if $n \equiv t \pmod {\nu(\beta)}$.
\end{theorem}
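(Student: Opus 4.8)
The plan is to translate the divisibility condition $\beta \mid \alpha_n$ into a congruence condition on $n$, using the structure $\alpha_n = \alpha_0 + \delta n$ together with the fact that $L$ is primitive (so $\alpha_0$ and $\delta$ are coprime in $\Z[i]$). The key observation is that $\beta \mid \alpha_n$ and $\beta \mid \alpha_t$ together give $\beta \mid (\alpha_n - \alpha_t) = \delta(n-t)$. So everything reduces to understanding: for a fixed $\beta \in \Z[i]$ and our fixed $\delta$, which rational integers $m$ satisfy $\beta \mid \delta m$? I claim this set is exactly $\nu(\beta)\Z$, and once that is established the theorem is immediate by taking $m = n - t$.

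First I would record the reverse direction, which is easy: if $n \equiv t \pmod{\nu(\beta)}$, write $n - t = \nu(\beta)k$. Since $\nu(\beta)$ is a rational integer divisible by $\beta$ (this is the defining property of $\nu$ stated in the paper), we get $\beta \mid \nu(\beta) \mid (n-t)$, hence $\beta \mid \delta(n-t) = \alpha_n - \alpha_t$, and combined with $\beta \mid \alpha_t$ this yields $\beta \mid \alpha_n$. For the forward direction, suppose $\beta \mid \alpha_n$. Then $\beta \mid \delta(n-t)$. Here is where primitivity enters: let $g = \gcd(\beta, \delta)$ in $\Z[i]$ and write $\beta = g\beta'$, $\delta = g\delta'$ with $\gcd(\beta', \delta') = 1$. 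From $g\beta' \mid g\delta'(n-t)$ we get $\beta' \mid \delta'(n-t)$, and since $\gcd(\beta',\delta')=1$ we conclude $\beta' \mid (n-t)$. Now $n-t$ is a rational integer divisible by the Gaussian integer $\beta'$, so by the property of $\nu$ we get $\nu(\beta') \mid (n-t)$.

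The remaining step — and the one I expect to be the main technical point — is to show $\nu(\beta') = \nu(\beta)$, i.e., that passing from $\beta$ to $\beta' = \beta/\gcd(\beta,\delta)$ does not change the value of $\nu$. This should follow from the primitivity of $L$: since $\alpha_0$ and $\delta$ are coprime, any common factor $g$ of $\beta$ and $\delta$ is coprime to $\alpha_0$; but $g \mid \beta \mid \alpha_t = \alpha_0 + \delta t$ forces $g \mid \delta t$ and then (using $\gcd(g,\alpha_0)=1$, or more directly chasing the factorization) one sees $g$ must in fact be a unit — i.e., $\beta$ is already coprime to $\delta$, so $\beta' = \beta$ up to a unit and $\nu(\beta') = \nu(\beta)$ trivially. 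I would double-check this: $g \mid \alpha_t$ and $g \mid \alpha_t - \alpha_0 \cdot(\text{nothing})$... more carefully, $g \mid \beta$ and $\beta \mid \alpha_t$, and also $g \mid \delta$, so $g \mid \alpha_t - \delta t = \alpha_0$; thus $g$ divides both $\alpha_0$ and $\delta$, forcing $g$ to be a unit by primitivity. Hence $\gcd(\beta,\delta)$ is a unit, $\beta' $ is an associate of $\beta$, and $\nu(\beta) = \nu(\beta') \mid (n-t)$, i.e., $n \equiv t \pmod{\nu(\beta)}$. This completes the argument; the only subtlety is correctly invoking that $\nu$ is invariant under associates and that $\nu(\beta)$ divides every rational integer divisible by $\beta$, both of which are stated in Section~\ref{primes}.
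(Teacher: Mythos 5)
Your proposal is correct and is essentially the paper's own argument: the detour through $g=\gcd(\beta,\delta)$ collapses, exactly as you note, to the paper's opening observation that any common divisor of $\beta$ and $\delta$ divides $\alpha_0=\alpha_t-\delta t$ and hence is a unit by primitivity, after which both proofs reduce $\beta\mid\alpha_n$ to $\beta\mid\delta(n-t)$, cancel $\delta$, and invoke the stated property that a rational integer is divisible by $\beta$ if and only if it is divisible by $\nu(\beta)$. No changes needed.
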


\begin{proof}
Suppose $\beta $ divides $\alpha_{t}$ for some $t$. 
%NOTE WE NEED RELATIVELY PRIME HERE: Then $\beta $ does not divide $\delta$, since otherwise it would also divide $\alpha_0= \alpha_t - \delta t $, which is not possible since $\delta$ and $\alpha_0$ are relatively prime by assumption. 
Then $\beta $ and $\delta$ are relatively prime, since any common divisor would also divide $\alpha_0= \alpha_t - \delta t$, but $\delta$ and $\alpha_0$ are relatively prime. Thus, $\beta$ divides $\alpha_n$  if and only if $\beta$ divides $\alpha_n-\alpha_t$, which in turn holds if and only if $\beta$ divides $n-t$ since $\alpha_n- \alpha_t = \delta(n-t)$.
But $n-t \in \Z$, so $\beta$ divides $\alpha_n$ if and only if $\nu(\beta)$ divides $n-t$.
\end{proof}

Theorem \ref{periodicity} implies that consecutive Gaussian integers  $\alpha_n$ and $\alpha_{n+1}$ on $L$  are always relatively prime over $\Z[i]$, just as consecutive rational integers on the real line  are always relatively prime over $\Z$. Also, because Theorem \ref{periodicity} is about Gaussian integers that divide some element on $L$, a natural follow-up problem is to characterize those Gaussian integers that  occur as divisors of elements on $L$.  In this section we specialize to rational integer and Gaussian prime divisors, and in Section \ref{divisor} we give the complete characterization of the set of Gaussian integer divisors.

We define the {\em divisor set of $L$}, denoted $\DL$, to be the set of Gaussian integers that divide some Gaussian integer on $L$. Our main result in Section \ref{divisor} (Theorem \ref{bigtheorem}) is a complete characterization of this set.  Here we begin by characterizing two of its subsets, the {\em rational set} and the {\em Gaussian-prime set},  which we need for our work in Section~\ref{CRT}.  We define the {\em rational set of $L$}, denoted $\ZL$, to be the set of rational integers that divide some Gaussian integer on $L$, and the {\em Gaussian-prime set of $L$}, denoted $\PL$, 
to be the set of non-rational Gaussian primes that divide some Gaussian integer on $L$. For easy reference, below are the set theoretical  definitions of these three sets for a given Gaussian line $L$:
\begin{align*}
\ZL &=\{r \in \Z : r|\alpha_n \text{ for some } n \in \Z \}; \\
\PL &= \{ \pi\in\Z[i]  : \pi\ \text{is a Gaussian prime, }\pi \not\in \Z, \text{ and }\pi|\alpha_n \text{ for some } n \in \Z\};\\
\DL &= \{ \beta\in\Z[i]:\beta|\alpha_n\ \text{for some } n \in \Z \}.
\end{align*}
Note that an element in any of these three sets does not necessarily lie on the line $L$, but simply divides some Gaussian integer that lies on $L$. 

In general, the divisor set $\DL$ of $L$ is not closed under multiplication.  For example, suppose 
 $1+2i$ divides $\ao$ and $1-2i$ divides $\alpha_1$, so $1+2i, 1-2i\in\DL$. Since $\nu(1+2i)=\nu(1-2i)=5$, it follows from Theorem \ref{periodicity} that $1+2i$ and $1-2i$ both divide every fifth Gaussian integer on $L$, starting with $\ao$ and $\alpha_1$ respectively. Thus,
 no integer on $L$  is divisible by their product  $i.e.$,
  $(1+2i)( 1-2i)=5\notin\DL$, and $\DL$ is not closed under multiplication. Our first lemma shows that this type of restriction from Theorem~\ref{periodicity} is really the only property preventing the divisor set from being closed under multiplication.

%Note that some of these primes may not be in the prime set of $L$ because here we are including the rational primes  $p\equiv 1\pmod 4$ and $p=2$, which are not Gaussian primes.
%Of course, if $L$ is the real or imaginary line (the special cases $\ao=0$) then every rational prime divides some Gaussian integer on $L$.  By contrast, the following lemma shows that for all other Gaussian lines only finitely many rational primes occur as divisors of some integer on $L$.
%Theorem \ref{PL} describes the prime set of $L$ and the  lemma is used in the proof.

%\begin{lemma} \label{finite_mod3}
%Let  $L$ be a primitive Gaussian line and $d \in \Z$. Then $d|\alpha_t$ for some $t$ iff $d | \Delta = ad-bc$. \end{lemma}

%\begin{proof}

%    \beq p|\alpha_t \ {\rm for\ some\ }t\in\Z&\Leftrightarrow &p|(a+tc)\quad  {\rm and}\quad  p|(b+td)\\
%&\Leftrightarrow & det \begin{bmatrix}
%    a       & c \\
%    b       & d \\
%\end{bmatrix} = 0 in \Z_d\\
% &\Leftrightarrow &  d|\Delta.\eeq

%\end{proof}

\begin{lemma} \label{ncrt}
Let $\beta$ and $\gamma$ be in the divisor set $\DL$ of $L$. If $\nu (\beta)$ and $\nu (\gamma)$ are relatively prime, then $\beta \gamma$ is in $\DL$.
\end{lemma}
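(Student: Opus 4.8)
The plan is to combine Theorem~\ref{periodicity} with the ordinary Chinese Remainder Theorem over $\Z$, after first showing that the hypothesis forces $\beta$ and $\gamma$ to be relatively prime over $\Z[i]$.

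The first step is an auxiliary observation: if a Gaussian prime $\pi$ lying over the rational prime $p$ divides $\beta$, then $p\mid\nu(\beta)$. To see this, note that for any Gaussian prime $\pi$ over $p$ the least positive rational integer divisible by $\pi$ is $p$ itself — this is immediate from the classification of Gaussian primes in Section~\ref{background} (for inert $p$ one has $\pi\doteq p$; for split $p$ one has $\pi\mid p=\pi\overline\pi$; for $p=2$ one has $1+i\mid 2$), together with the norm bound $N(\pi)\mid r^2$ ruling out smaller positive $r$. Since $\nu(\beta)$ is a rational integer divisible by $\beta$, it is divisible by $\pi$, hence by $p$. Granting this, suppose for contradiction that $\beta$ and $\gamma$ shared a common Gaussian prime factor $\pi$, and let $p$ be the rational prime it lies over; then $p$ divides both $\nu(\beta)$ and $\nu(\gamma)$, contradicting $\gcd(\nu(\beta),\nu(\gamma))=1$. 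Therefore $\beta$ and $\gamma$ are relatively prime over $\Z[i]$.

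Next, since $\beta,\gamma\in\DL$, Theorem~\ref{periodicity} provides residues $s$ and $t$ with $\beta\mid\alpha_n$ exactly when $n\equiv s\pmod{\nu(\beta)}$ and $\gamma\mid\alpha_n$ exactly when $n\equiv t\pmod{\nu(\gamma)}$. As $\nu(\beta)$ and $\nu(\gamma)$ are relatively prime, the Chinese Remainder Theorem over $\Z$ yields an integer $n$ with $n\equiv s\pmod{\nu(\beta)}$ and $n\equiv t\pmod{\nu(\gamma)}$, so this single $\alpha_n$ is divisible by both $\beta$ and $\gamma$. Finally, because $\Z[i]$ is a unique factorization domain and $\beta,\gamma$ are relatively prime, $\beta\mid\alpha_n$ and $\gamma\mid\alpha_n$ give $\beta\gamma\mid\alpha_n$, whence $\beta\gamma\in\DL$.

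The routine parts are the appeal to Theorem~\ref{periodicity}, the use of the Chinese Remainder Theorem, and the coprime-divisors-to-product step in a UFD; the one point requiring genuine care is the auxiliary observation of the first paragraph, namely that a Gaussian prime over $p$ cannot divide $\beta$ without contributing a factor of $p$ to $\nu(\beta)$, which is exactly what makes coprimality of the $\nu$-values transfer to coprimality in $\Z[i]$.
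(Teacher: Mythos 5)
Your proof is correct and follows essentially the same route as the paper's: Theorem \ref{periodicity} supplies the two congruence conditions and the Chinese Remainder Theorem over $\Z$ produces a single $\alpha_n$ divisible by both $\beta$ and $\gamma$. The only difference is that you explicitly check that coprimality of $\nu(\beta)$ and $\nu(\gamma)$ forces $\beta$ and $\gamma$ to be relatively prime over $\zi$, so that $\beta\gamma\mid\alpha_n$ — a step the paper's proof leaves implicit — and your justification of it is sound.
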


\begin{proof}
Suppose $\beta,\gamma\in\DL$. Then, by Theorem \ref{periodicity}, there exist integers $s$ and $t$ such that $\beta|\alpha_n$ if and only if $n \equiv s \pmod{\nu (\beta)}$ and $\gamma|\alpha_n$ if and only if $n \equiv t \pmod{\nu (\gamma)}$. By the Chinese Remainder Theorem, there is an $n$ that satisfies both congruences. Therefore, $\beta \gamma \in \DL$.
\end{proof}

We use Lemma \ref{ncrt} to prove our next theorem and characterize the rational set of $L$. Recall from (\ref{D})  that $\Delta=ad-bc$ is a rational integer associated to $L$.

\begin{theorem} \label{ZL} Let $r\in\Z$.  Then  $r$ is in the rational set $\ZL$ of $L$ if and only if r divides~$\Delta$.\end{theorem}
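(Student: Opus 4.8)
The plan is to characterize when a rational integer $r$ divides some $\alpha_n = \alpha_0 + \delta n$ on $L$ by reducing the divisibility condition modulo $r$ to a linear condition on $n$, and then to recognize the obstruction to solvability as exactly the condition $r \mid \Delta$. Write $\alpha_n = x + yi$ with $x = a + nc$ and $y = b + nd$. Then $r \mid \alpha_n$ in $\Z[i]$ if and only if $r \mid x$ and $r \mid y$ in $\Z$, i.e. $a + nc \equiv 0 \pmod r$ and $b + nd \equiv 0 \pmod r$ simultaneously. So the question becomes: for which $r$ does this pair of linear congruences in the single unknown $n$ have a common solution?

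First I would handle the $r \mid \Delta \Rightarrow r \in \ZL$ direction. The cleanest route is the multiplicative one suggested by Lemma~\ref{ncrt}: factor $r$ into prime powers $r = \prod p_i^{e_i}$; since $\nu(p_i^{e_i}) = p_i^{2e_i}$ when $p_i$ stays prime or ramifies, and $\nu$ of a rational prime power is still a prime power in the same prime, the $\nu$-values of the $p_i^{e_i}$ are pairwise coprime, so it suffices by Lemma~\ref{ncrt} to show each prime power $p^e \mid \Delta$ lies in $\ZL$. For a single prime power $p^e$, I want to solve $a + nc \equiv 0$ and $b + nd \equiv 0 \pmod{p^e}$. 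Here $\gcd(c,d) = 1$ (Lemma~\ref{notation}), so at least one of $c,d$ is a unit mod $p$; say $p \nmid c$, then the first congruence determines $n \equiv -a c^{-1} \pmod{p^e}$, and substituting into the second gives $b - a c^{-1} d \equiv 0 \pmod{p^e}$, i.e. $bc - ad \equiv 0$, i.e. $\Delta \equiv 0 \pmod{p^e}$, which holds by hypothesis. (If instead $p \nmid d$, the symmetric computation gives the same condition.) Hence $p^e \in \ZL$, and multiplying up, $r \in \ZL$.

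For the converse, suppose $r \mid \alpha_t$ for some $t$. Then $a + tc \equiv 0$ and $b + td \equiv 0 \pmod r$. Multiply the first by $d$ and the second by $c$ and subtract: $ad - bc \equiv 0 \pmod r$, that is $r \mid \Delta$. This direction is immediate and requires no case analysis. I should also note the degenerate case $\Delta = 0$: by Lemma~\ref{delta} this happens exactly when $L$ is the real or imaginary line, where every rational integer divides some $\alpha_n$ (namely $\alpha_n = n$ or $\alpha_n = ni$), consistent with "$r$ divides $0$ for all $r$."

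I expect the main obstacle to be the forward direction's local solvability step — specifically making sure the reduction to "$n \equiv -ac^{-1}$" and the resulting condition $\Delta \equiv 0 \pmod{p^e}$ is correctly bookkept when $p \mid c$ (forcing $p \nmid d$), and confirming that $\nu(p^e)$ is indeed a power of $p$ in every case ($p = 2$, $p \equiv 1$, $p \equiv 3 \bmod 4$) so that Lemma~\ref{ncrt} applies. An alternative that sidesteps the prime-power bookkeeping is to solve the two congruences mod $r$ directly using $\gcd(c,d)=1$: pick $u,v \in \Z$ with $uc + vd = 1$, and check that $n_0 = -(ua + vb)$ works mod $r$ precisely when $r \mid \Delta$; this may be the slicker write-up, but the Lemma~\ref{ncrt} approach is the one the section has been building toward.
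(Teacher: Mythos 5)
Your proposal is correct and takes essentially the same route as the paper: reduce to prime powers via Lemma~\ref{ncrt} (using only that $\nu$ of a rational prime power is a power of the same prime) and then solve the pair of congruences $a+nc\equiv 0$, $b+nd\equiv 0 \pmod{p^e}$ using the invertibility of $c$ or $d$, which is exactly the condition $p^e\mid\Delta$; your converse by cross-multiplying mod $r$ is just a direct form of the same elimination that the paper runs inside its chain of equivalences. One harmless slip: since $\nu(r)=r$ for all $r\in\Z$, one has $\nu(p^e)=p^e$, not $p^{2e}$, but all your argument needs is that these values are powers of distinct primes, so the coprimality hypothesis of Lemma~\ref{ncrt} holds as you claim.
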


\begin{proof}  Note that if $r,s\in\Z$ satisfy $rs\in\ZL$, then $r\in\ZL$ and $s\in\ZL$ by the definition of the rational set.  
It follows from this and Lemma \ref{ncrt}    that it is sufficient to prove Theorem \ref{ZL}  for prime powers.

Let  $r=p^t$, where $p$ is a rational prime. Then $r\in\ZL$ if and only if $p^t$ divides $\alpha_n$ for some $n\in\Z$. We have that $\alpha_n=\ao+n\delta$, so $\re(\alpha_n)=a+nc$ and $\im(\al_n)=b+nd$. 
Thus, $p^t$ divides $\alpha_n$ if and only if $p^t$ divides both $a+nc$ and $b+nd$. Recall that $c$ and $d$ are relatively prime, so at least one of them is not divisible by $p$.  With out loss of generality, we assume that $p$ does not divide $c$.  Then $c$ has a multiplicative inverse modulo $p^t$. Thus we have:
 \beq p^t|\alpha_n &\Longleftrightarrow &a+nc\equiv 0\pmod{p^t}
 \quad  {\rm and}\quad  b+nd\equiv 0\pmod{p^t}\\
&\Longleftrightarrow  &b\equiv -nd\pmod {p^t}, \text{ where }  n\equiv -ac^{-1}\pmod {p^t}\\
&\Longleftrightarrow &b\equiv ac^{-1}d\pmod {p^t}\\ 
 &\Longleftrightarrow &ad\equiv bc\pmod{p^t}\\
 &\Longleftrightarrow &p^t|\,\Delta,
\eeq
as needed.
\end{proof}

Thus, the rational integers that divide some Gaussian integer $\alpha_n$ on $L$ are exactly the divisors of 
$\Delta$. Consequently, the rational set $\ZL$ of $L$ is finite unless $\Delta=0$; that is, unless $L$ is  the real or imaginary line. 
 Our next theorem characterizes the Gaussian prime set of $L$ and shows, by contrast, that this set is always infinite. 

% Moreover, when $\Delta=0$, Theorem \ref{ZL} is just the fact that all rational integers occur as divisors on the real and imaginary lines.
% Recall from Theorem \ref{periodicity} that if $\pi\in\PL$ then  divisibility by $\pi$ on $L$ is periodic.  Namely, if $\pi\in\PL$ lies over the rational prime $p$, then there exists $t\in\Z$ such that $\pi$ divides $\alpha_n$  if and only if $n\equiv t\pmod p$.

 \begin{theorem}  \label{PL}  Let $\pi$ be a Gaussian prime with $\pi\not\in\Z$. Then $\pi\in\PL$ if and only if $\pi$ does not divide $\delta$. 
%\begin{align*} \PL &= \{ \pi\in\Z[i]  : \pi\ \text{is prime }\pi \not\in \Z, \pi|\alpha_n \text{ for some } n \in \Z\} \\
%&= \{\pi \in \Z[i] : \pi \text{ is prime}, \pi \not\in \Z, \ \pi \nmid \delta \}
%\end{align*}
\end{theorem}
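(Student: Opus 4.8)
## Proof Proposal

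The plan is to prove both directions by analyzing when a non-rational Gaussian prime $\pi$ can divide some $\alpha_n = \alpha_0 + n\delta$. Since $\pi \notin \Z$, we know $\pi$ lies over a rational prime $p$ with $p \equiv 1 \pmod 4$ or $p = 2$; in either case $N(\pi) = p$, and by the characterization of Gaussian primes, $\pi$ is not an associate of its conjugate $\overline{\pi}$ (or, for $p=2$, $\pi$ is an associate of $1+i$). The key fact I would invoke repeatedly is that $\Z[i]/(\pi)$ is a field with $p$ elements, so divisibility by $\pi$ is detected by a single linear congruence.

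For the easy direction: suppose $\pi$ divides $\delta$. If $\pi$ also divided some $\alpha_t$ on $L$, then $\pi$ would divide $\alpha_t - t\delta = \alpha_0$, so $\pi$ would be a common divisor of $\alpha_0$ and $\delta$, contradicting primitivity of $L$ (Lemma~\ref{notation}). Hence $\pi \notin \PL$. This is essentially the same argument used in the proof of Theorem~\ref{periodicity}.

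For the main direction: suppose $\pi$ does not divide $\delta$. Then $\delta$ is a unit modulo $\pi$ in the field $\Z[i]/(\pi)$, so the congruence $\alpha_0 + n\delta \equiv 0 \pmod{\pi}$ has a solution $n \equiv -\alpha_0 \delta^{-1} \pmod{\pi}$. The one subtlety is that I need $n$ to be a \emph{rational integer}, not merely a Gaussian integer residue class. Here I would use that $\Z \to \Z[i]/(\pi)$ is surjective: since $N(\pi) = p$ and the natural map $\Z/(p) \to \Z[i]/(\pi)$ is an injective homomorphism between fields of the same size $p$, it is an isomorphism. Therefore the residue $-\alpha_0\delta^{-1} \in \Z[i]/(\pi)$ is represented by some rational integer $n_0$, and $\pi \mid \alpha_{n_0}$, so $\pi \in \PL$.

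I expect the main obstacle — really the only point requiring care — to be this surjectivity of $\Z \to \Z[i]/(\pi)$ for non-rational $\pi$; it is exactly where the hypothesis $\pi \notin \Z$ is used, since for a rational prime $q \equiv 3 \pmod 4$ the quotient $\Z[i]/(q)$ has $q^2$ elements and the map from $\Z$ is not surjective (indeed $\nu(q) = q$ and whether $q \in \PL$ is governed by Theorem~\ref{ZL}, not this statement). Once that is in hand, the proof is just the two short arguments above. Alternatively, I could phrase the forward direction concretely using coordinates: writing $\pi$ over $p$, divisibility by $\pi$ of $\alpha_n = (a+nc) + (b+nd)i$ reduces mod $p$ to a single linear condition on $n$ with invertible leading coefficient (because $\pi \nmid \delta$ forces $\delta \not\equiv 0$ in the residue field), which always has a solution $n \in \Z/(p)$; I would pick whichever presentation reads more cleanly alongside the proof of Theorem~\ref{ZL}.
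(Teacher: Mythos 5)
Your proposal is correct, but it takes a genuinely different route from the paper's. For the converse direction the paper argues by cases: if $p\mid\Delta$ it invokes Theorem~\ref{ZL} to produce an element of $L$ divisible by $p$ (hence by $\pi$); if $p\nmid\Delta$ and $p\equiv 1\pmod 4$ it studies the norm polynomial $f(n)=N(\alpha_0+\delta n)$ from (\ref{N}), whose discriminant $-4\Delta^2$ is a nonzero square modulo $p$, so $f$ has two distinct roots modulo $p$, and then Theorem~\ref{periodicity} forces $\pi$ and $\overline\pi$ each to divide one of the two corresponding elements; the case $p=2$ is handled separately via the double root. You instead give a uniform argument for all degree-one primes: since $N(\pi)=p$, the map $\Z/(p)\to\Z[i]/(\pi)$ is an isomorphism of fields, so the linear congruence $\alpha_0+n\delta\equiv 0\pmod{\pi}$, which is solvable because $\pi\nmid\delta$ makes $\delta$ a unit modulo $\pi$, has a rational integer solution $n_0$, giving $\pi\mid\alpha_{n_0}$. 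Your route is shorter, avoids the quadratic-residue computation and the case split on $p\mid\Delta$, and isolates exactly where the hypothesis $\pi\notin\Z$ enters; the paper's route stays within the machinery it has already built (Theorems~\ref{ZL} and~\ref{periodicity} and the norm form) and yields the extra observation that when $p\nmid\Delta$ the conjugate primes $\pi$ and $\overline\pi$ divide distinct elements of $L$, a fact echoed later in Section~\ref{divisor}. One shared caveat: like the paper's own proof (which asserts $p\equiv 1\pmod 4$ whenever $\pi\notin\Z$ and $p\neq 2$), you read the hypothesis $\pi\notin\Z$ as excluding associates of inert rational primes such as $3i$, for which divisibility is governed by Theorem~\ref{ZL} rather than by this statement; your closing remark about $q\equiv 3\pmod 4$ shows you are aware of this, so your interpretation matches the paper's intent.
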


\begin{proof} 
First suppose $\pi$ divides $\delta$. Then $\pi$ does not divide $\alpha_n=\ao+\delta n$ for all $n$ since $\alpha_0$ and $\delta$ are relatively prime.   Thus, $\pi \not \in \PL$ in this case.

Conversely, suppose  $\pi$ does not divide $\delta$.  Let $\pi$ lie over the rational prime $p$.  If $p$ divides $\Delta$, then $p$ divides some Gaussian integer $\alpha_n$ on  $L$ by Theorem~\ref{ZL}. Thus $\pi$ also divides $\alpha_n$, and $\pi\in\PL$ as needed. 
Thus, from now on we assume $p$ does not divide $\Delta$, and show that $\pi\in\PL$ in this case as well.

As in (\ref{N}),  the norm of an arbitrary Gaussian integer $\alpha_n$ on $L$ can be viewed as a quadratic polynomial 
$$f(n)=N(\ao+\delta n)=N(\delta)n^2+Tr(\alpha_0\overline\delta\,)n+N(\alpha_0),$$
with discriminant  Disc$(f)=-4\Delta^2$.
If $p\neq 2$, then $p\equiv 1\pmod 4$ since $\pi\notin\Z$. In this case, $-1$ is a square modulo $p$ and so Disc$(f)$ is a non-zero square modulo $p$.  Therefore, $f(n)$ has two distinct roots modulo $p$, so there are $r,s\in\Z$, $r\not\equiv s\pmod p$, such that 
$N(\alpha_r)\equiv N(\alpha_s)\equiv 0\pmod p$.  It follows from Theorem \ref{periodicity} that $\pi$ and $\overline\pi$ both divide exactly one of $\alpha_r$ and $\alpha_s$.  Thus $\pi\in\PL$ in this case.
If $p=2$, then Disc$(f)\equiv 0\pmod p$ and $f$ has a double root modulo $p$.
It follows that  $\pi$ divides either $\alpha_0$ or $\alpha_1$.   Thus, $\pi\in\PL$ in this case as well.
\end{proof}

Since  $\delta\neq 0$, it follows from Theorem \ref{PL} that the divisor set of a Gaussian line always contains infinitely many Gaussian primes. In particular, we have the following corollary to Theorem \ref{PL}.

\begin{corollary} \label{minprimes} The divisor set $\DL$ of $L$ contains at least one Gaussian prime that lies over $p$ for every rational prime $p\equiv 1\pmod 4$.
\end{corollary}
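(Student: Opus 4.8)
The plan is to deduce this corollary directly from Theorem~\ref{PL}. Fix a rational prime $p \equiv 1 \pmod 4$. By the classification of Gaussian primes recalled in Section~\ref{background}, $p$ splits as $p = \pi \overline{\pi}$, where $\pi$ and $\overline{\pi}$ are non-associate Gaussian primes of norm $p$, and in particular neither lies in $\Z$. So it suffices to exhibit one Gaussian prime lying over $p$ that divides some $\alpha_n$ on $L$, and by Theorem~\ref{PL} this amounts to showing that at least one of $\pi$, $\overline{\pi}$ fails to divide $\delta$.

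The key observation is that $\pi$ and $\overline{\pi}$ cannot both divide $\delta$. Indeed, if they did, then since they are non-associate Gaussian primes their product $\pi\overline{\pi} = p$ would divide $\delta = c+di$; but $p$ dividing $c+di$ forces $p \mid c$ and $p \mid d$, contradicting the fact from Lemma~\ref{notation} that $c$ and $d$ are relatively prime. Hence at least one of $\pi$, $\overline{\pi}$ does not divide $\delta$, and Theorem~\ref{PL} places that prime in $\PL \subseteq \DL$. Since $p \equiv 1 \pmod 4$ was arbitrary, the corollary follows.

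There is no real obstacle here; the content is entirely in Theorem~\ref{PL}, and the only thing to check is the small divisibility argument that $p \nmid \delta$ because $\gcd(c,d)=1$. One minor point worth stating carefully is that we are using the splitting behavior of $p \equiv 1 \pmod 4$ to guarantee the existence of non-rational Gaussian primes over $p$ in the first place, so that Theorem~\ref{PL} is applicable. (By contrast, a prime $p \equiv 3 \pmod 4$ has only $p$ itself, up to associates, as a Gaussian prime over it, and $p \in \Z$, so Theorem~\ref{PL} says nothing — which is why the corollary restricts to $p \equiv 1 \pmod 4$.)
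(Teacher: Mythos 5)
Your proposal is correct and follows essentially the same route as the paper: both arguments reduce to observing that $\pi$ and $\overline{\pi}$ cannot both divide $\delta$, since then $p=\pi\overline{\pi}$ would divide $\delta=c+di$, contradicting $\gcd(c,d)=1$, and then Theorem~\ref{PL} supplies a prime over $p$ in $\PL\subseteq\DL$. The only difference is presentational (you argue directly, the paper by contradiction), so nothing further is needed.
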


\begin{proof}
Let $\pi$ be a Gaussian prime that lies over the rational prime $p\equiv 1\pmod 4$. Suppose that neither $\pi$ nor $\overline{\pi}$ are in $\DL$. Then neither is in $\PL$ and so both divide $\delta$  by Theorem \ref{PL}. Thus $p$ divides $\delta$ and $p$ is a common divisor of $c$ and $d$, which contradicts $L$ being primitive.\end{proof}

Taken together, Theorems \ref{ZL} and \ref{PL} imply that if a Gaussian prime $\pi$ divides $\delta$, and $\pi$ lies over $p$, then $p$ does not divide $\Delta$ (or, equivalently, $\pi$ does not divide $\Delta$).  This can also be seen directly:  If $\pi$ is a common divisor of both $\delta$ and $\Delta$, then $\pi$ divides $d$ since $d\ao=\Delta+b\delta$ and $\ao$ and $\delta$ are relatively prime.  Now, $\delta=c+di$, so $\pi$ also divides $c$.  But $c, d\in \Z$, so it follows that $p$ is a common divisor of $c$ and $d$, which contradicts $L$ being primitive.

Theorems \ref{ZL} and \ref{PL} characterize the rational and Gaussian prime set of a Gaussian line.  In Section \ref{divisor} we use these theorems to give a complete characterization of the divisor set as well.  First we turn to some consequences of the theorems in this section.

%CRT
\section{The Chinese Remainder Theorem for Gaussian Lines.}\label{CRT}

In this section we prove a theorem about Gaussian lines that is analogous to the Chinese Remainder Theorem for $\Z$.  We also use the Chinese Remainder Theorem for $\Z[i]$ to prove that  there are always infinitely many Gaussian lines that satisfy any given CRT-type divisibility properties. 

\medskip

The Chinese Remainder Theorem (CRT) for $\Z$  implies that there will always be a solution to a system of linear congruences over $\Z$  when the moduli are pairwise relatively prime. It is well known that this theorem generalizes with the same proof to the Gaussian integers (or to any Euclidean domain).  We state this more general version here since we will need it in our later work.
%\bigskip
%
%In the natural numbers, to construct a sequence where 14 divides the first element, 13 divides the fourth element, and 17 divides the seventh element, we would look at the following system of congruences: \begin{align*}
% &a\equiv 0 \pmod 14 \\
% &a\equiv -3 \pmod 13 \\
% &a\equiv -6 \pmod 17 \\
%\end{align*}
%
%A value for $a$ that satisfies these congruences would give us the starting point of the sequence desired. The Chinese Remainder Theorem implies that there will always be a solution to these congruences when the moduli are all pairwise relatively prime. The following well-known theorem is a variation of the Chinese Remainder Theorem for Gaussian integers.  CITE IT

\begin{theorem}[CRT for $\zi$] \label{chinese}
Let $\mu_1,\mu_2,\ldots, \mu_k$ be pairwise relatively prime Gaussian integers and $\beta_1, \beta_2,\ldots, \beta_k$ be arbitrary Gaussian integers.
%Let $\Delta= {\prod_{i=1}^{k} \mu_{i}}$ be the product of the moduli.
Then the system of $k$ congruences
$$x \equiv \beta_j \pmod{\mu_j}, \ 1\leq j\leq k,$$
%\begin{align*}
%x &\equiv \beta_1 \pmod{\mu_1} \\
%x &\equiv \beta_2 \pmod{\mu_2 }\\
%&\ \vdots \\
%x &\equiv \beta_k \pmod{\mu_k} \\
%\end{align*}
has a unique solution $\tau\in\Z[i]$ modulo $\mu_1\mu_2\ldots \mu_k$.
\end{theorem}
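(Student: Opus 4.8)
The plan is to mimic the classical CRT proof for $\Z$, using only that $\zi$ is a Euclidean domain, and hence a PID in which B\'ezout's identity holds and unique factorization is available. I would split the argument into existence and uniqueness. For existence, I would construct, for each index $j$, a ``localizing'' element $e_j\in\zi$ with $e_j\equiv 1\pmod{\mu_j}$ and $e_j\equiv 0\pmod{\mu_k}$ for every $k\neq j$; then $\tau=\sum_{j=1}^{k}\beta_j e_j$ solves the system, since modulo $\mu_j$ every term with index $\neq j$ vanishes and the remaining term is $\beta_j e_j\equiv\beta_j$.

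To build $e_j$, set $\sigma_j=\prod_{k\neq j}\mu_k$. The key claim is that $\mu_j$ and $\sigma_j$ are relatively prime. This follows from the pairwise coprimality hypothesis together with the standard fact, valid in any UFD, that an element coprime to each of several elements is coprime to their product (compare prime factorizations, or argue that any common prime divisor of $\mu_j$ and $\sigma_j$ would have to divide one of the factors $\mu_k$). Once $\gcd(\mu_j,\sigma_j)=1$, B\'ezout in $\zi$ provides $x_j,y_j\in\zi$ with $x_j\mu_j+y_j\sigma_j=1$; take $e_j=y_j\sigma_j$. Then $e_j=1-x_j\mu_j\equiv 1\pmod{\mu_j}$, while $\mu_k\mid\sigma_j$ for $k\neq j$ gives $e_j\equiv 0\pmod{\mu_k}$, as required.

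For uniqueness, suppose $\tau$ and $\tau'$ both satisfy the system. Then $\mu_j\mid(\tau-\tau')$ for every $j$, and since the $\mu_j$ are pairwise coprime, the same prime-factorization fact shows $\mu_1\mu_2\cdots\mu_k\mid(\tau-\tau')$, i.e. $\tau\equiv\tau'\pmod{\mu_1\mu_2\cdots\mu_k}$. Conversely, any $\tau'$ congruent to $\tau$ modulo the product again solves the system, so the solution set is exactly one residue class modulo $\mu_1\mu_2\cdots\mu_k$.

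The only content beyond bookkeeping is the two appeals to the ``coprime to each factor implies coprime to the product'' lemma: one to invoke B\'ezout in the existence step, one to pass from divisibility by each $\mu_j$ to divisibility by the product in the uniqueness step. I expect that to be the main (minor) obstacle, although in $\zi$ it is immediate from unique factorization, so the proof is genuinely identical to the one over $\Z$. Alternatively, one could induct on $k$, reducing to the two-modulus case: if $\gcd(\mu_1,\mu_2)=1$ then $x\mu_1+y\mu_2=1$ for some $x,y\in\zi$, and $\beta_2 x\mu_1+\beta_1 y\mu_2$ is a solution, with the general case obtained by grouping $\mu_2\mu_3\cdots\mu_k$ into a single modulus coprime to $\mu_1$.
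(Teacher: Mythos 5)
Your proof is correct and is exactly the classical argument the paper invokes: the paper offers no separate proof, stating only that CRT for $\Z$ ``generalizes with the same proof'' to $\Z[i]$ (indeed to any Euclidean domain), which is precisely your B\'ezout-based construction of the elements $e_j$ together with the unique-factorization argument for uniqueness.
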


Note that CRT for $\Z$ is just Theorem \ref{chinese} with $\beta_j, \mu_j\in\Z$, $1\leq j\leq k$. In the spirit of this paper, we extend CRT for $\Z$ to CRT for  Gaussian lines. First we restate CRT for $\Z$ in terms of divisibility since the analogous statement for Gaussian lines is given in terms of divisibility.

\begin{theorem}[CRT for $\Z$] \label{crtZ}
Let $m_1, m_2,\ldots, m_k$ be pairwise relatively prime rational integers and $b_1, b_2,\ldots, b_k$ be arbitrary rational integers.  Then there is a unique rational integer $t$ modulo $m_1 m_2\cdots m_k$ such that
$$m_1|{(t+b_1)}, \ m_2|{(t+b_2)}, \ \ldots, \ m_k|{(t+b_k)}.$$
\end{theorem}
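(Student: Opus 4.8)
The plan is to observe that Theorem \ref{crtZ} is merely a reformulation, in the language of divisibility, of the classical Chinese Remainder Theorem for $\Z$, which is the special case of Theorem \ref{chinese} in which every $\mu_j$ and every $\beta_j$ is taken in $\Z$. All that is needed is to translate the divisibility hypotheses into congruences and to match up the two uniqueness clauses.

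First I would record the elementary dictionary: for rational integers $m$, $t$, $b$ one has $m\mid(t+b)$ if and only if $t\equiv -b\pmod{m}$. Applying this for each index $j$, the system $m_1\mid(t+b_1),\ \ldots,\ m_k\mid(t+b_k)$ is equivalent to the system of congruences $t\equiv -b_1\pmod{m_1},\ \ldots,\ t\equiv -b_k\pmod{m_k}$. Since $m_1,\ldots,m_k$ are pairwise relatively prime, Theorem \ref{chinese} applied with $\mu_j=m_j$ and $\beta_j=-b_j$ (all in $\Z$) yields a solution $t$, unique modulo $\mu_1\cdots\mu_k=m_1\cdots m_k$; translating back by the dictionary gives exactly the $t$ claimed, unique modulo $m_1m_2\cdots m_k$. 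If one prefers to avoid invoking Theorem \ref{chinese}, the same conclusion follows from the standard direct argument: set $M=m_1\cdots m_k$ and $M_j=M/m_j$; since $\gcd(M_j,m_j)=1$, choose $M_j'$ with $M_jM_j'\equiv 1\pmod{m_j}$ and take $t\equiv\sum_j(-b_j)M_jM_j'\pmod{M}$; uniqueness modulo $M$ holds because any two solutions $t,t'$ satisfy $m_j\mid(t-t')$ for all $j$, whence $m_1\cdots m_k\mid(t-t')$ by pairwise coprimality.

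There is no genuine obstacle in this proof; the statement is included only so that the Gaussian-line analog to come can be phrased in the same divisibility language. The one point deserving a word of care is that the uniqueness is asserted modulo the full product $m_1\cdots m_k$ rather than modulo the least common multiple of the $m_j$, and this is precisely where the pairwise-coprimality hypothesis is used.
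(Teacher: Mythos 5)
Your proposal is correct and matches the paper's treatment: the paper gives no separate proof of Theorem \ref{crtZ}, simply noting that it is the classical CRT (Theorem \ref{chinese} with all $\mu_j,\beta_j\in\Z$) restated in the language of divisibility via $m\mid(t+b)\iff t\equiv -b\pmod m$, which is exactly your dictionary argument. Your added remark on where pairwise coprimality enters the uniqueness clause is fine but not something the paper elaborates.
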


We use the function $\nu:\Z[i]\rightarrow\Z$ defined (\ref{morm}) to extend Theorem \ref{crtZ} to any Gaussian line.  Since $\nu(n)=n$ for all $n\in\Z$,  the following theorem is exactly CRT for $\Z$ when $L$ is the real line.

\begin{theorem}[CRT for Gaussian lines] \label{crtGL}  Let $L$ be a primitive Gaussian line, and suppose
$\mu_1, \mu_2,\ldots, \mu_k$ are Gaussian integers in the divisor set $\DL$ of $L$ such  that $\nu(\mu_1),\nu(\mu_2), \ldots, \nu(\mu_k)$ are pairwise relatively prime. Let $b_1, b_2,\ldots, b_k$ be arbitrary rational integers. Then there is a unique rational integer $t$ modulo $\nu(\mu_1)\nu(\mu_2) \cdots \nu(\mu_k)$ such that
$$ \mu_1|\alpha_{t+b_1}, \ \mu_2|\alpha_{t+b_2}, \ \ldots, \ \mu_k|\alpha_{t+b_k}.$$
\end{theorem}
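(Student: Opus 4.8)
The plan is to convert each of the $k$ divisibility conditions on the line $L$ into an ordinary congruence for $t$ over $\Z$, and then invoke the classical CRT for $\Z$ (Theorem~\ref{crtZ}).

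First, since each $\mu_j$ lies in $\DL$, it divides some Gaussian integer on $L$, so Theorem~\ref{periodicity} applies: there is an integer $s_j$ such that $\mu_j \mid \alpha_n$ if and only if $n \equiv s_j \pmod{\nu(\mu_j)}$. Taking $n = t+b_j$, the condition $\mu_j \mid \alpha_{t+b_j}$ is equivalent to $t + b_j \equiv s_j \pmod{\nu(\mu_j)}$, that is, to $\nu(\mu_j) \mid \bigl(t + (b_j - s_j)\bigr)$.

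Next, by hypothesis $\nu(\mu_1),\ldots,\nu(\mu_k)$ are pairwise relatively prime rational integers, so Theorem~\ref{crtZ} — applied with moduli $\nu(\mu_j)$ and with its ``$b_j$'' taken to be $b_j - s_j$ — produces a unique rational integer $t$ modulo $\nu(\mu_1)\cdots\nu(\mu_k)$ satisfying all $k$ congruences $\nu(\mu_j) \mid \bigl(t + (b_j - s_j)\bigr)$ simultaneously. Because the reduction in the previous paragraph is an ``if and only if'', this residue class is precisely the set of $t$ for which $\mu_j \mid \alpha_{t+b_j}$ holds for every $j$, so existence and uniqueness modulo $\nu(\mu_1)\cdots\nu(\mu_k)$ follow at once, which is the assertion of the theorem.

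There is no genuine obstacle here beyond bookkeeping: the work is entirely front-loaded into Theorem~\ref{periodicity}, which simultaneously packages the periodicity of divisibility on $L$ and identifies $\nu(\mu_j)$ as exactly the relevant period, together with the pairwise coprimality hypothesis, which is precisely what is needed to feed the resulting congruences into CRT for $\Z$. The one point worth emphasizing is that it is the periods $\nu(\mu_j)$, rather than the norms $N(\mu_j)$, that make the moduli line up with the classical statement and recover CRT for $\Z$ verbatim in the case where $L$ is the real line; this is why $\nu$ is the natural function to use in formulating the theorem.
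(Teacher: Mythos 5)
Your proposal is correct and follows essentially the same route as the paper: use Theorem~\ref{periodicity} to translate each divisibility condition $\mu_j \mid \alpha_{t+b_j}$ into a congruence modulo $\nu(\mu_j)$, then apply CRT for $\Z$ (Theorem~\ref{crtZ}) to the resulting system. Your explicit remark that the translation is an equivalence (giving uniqueness, not just existence) is a small but welcome point of care that the paper leaves implicit.
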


\begin{proof}
Since $\mu_j\in\DL$, $1\leq j\leq k$, it follows from  Theorem \ref{periodicity} that for each $j$  there exists $m_j\in\Z$ such that $\mu_j$ divides the Gaussian integer $\alpha_{n}$ on $L$  if and only if $n \equiv m_j \pmod {\nu(\mu_j)}$.
By Theorem \ref{crtZ}, the  system of $k$ congruences
$$x \equiv m_j-b_j \pmod {\nu(\mu_{j})}, \  1\leq j\leq k,$$
has a unique solution $x\equiv t\pmod{\nu(\mu_1)\nu(\mu_2) \cdots \nu(\mu_k)}$.
Thus, for $1\leq j\leq k$, we have $t+b_j\equiv m_j \pmod{\nu(\mu_j)}$  and  $\alpha_{t+b_j}$ is divisible by $\mu_j$ as needed.
\end{proof}

Now, suppose you want to find a primitive Gaussian line that satisfies certain CRT-type divisibility properties. For instance, suppose you want a line where $2+i$ divides $\alpha_1$, $2+3i$ divides $\alpha_2$, and $4080 + 1397i$ divides $\alpha_3$. It follows from our next theorem that infinitely many such lines exist (one example in this case is the line defined by $\alpha_0=1$ and $\delta=6297+8234i$), and the proof shows how to construct them.

\begin{theorem} \label{newchinese}
Let $b_1, b_2, \ldots, b_k $ be 
%non-negative 
rational integers and $\mu_1, \mu_2, \ldots, \mu_k$ be pairwise relatively prime Gaussian integers.  Then there are infinitely many primitive Gaussian lines  $L$ such that $\mu_j $ divides the Gaussian integer $ \alpha_{b_j}$ \hskip-.03in on $L$ for all $1 \leq j\leq k$.
\end{theorem}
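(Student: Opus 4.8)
The plan is a two–stage construction: first fix a small base point $\ao$, then produce infinitely many admissible directions $\delta$. Set $g_j=\gcd(b_j,\mu_j)$ for $1\le j\le k$; since the $\mu_j$ are pairwise relatively prime, so are the $g_j$, and I would take $\ao:=g_1g_2\cdots g_k$. The requirement ``$\mu_j$ divides the Gaussian integer of index $b_j$ on $L$'' then reads $\mu_j\mid\ao+b_j\delta$, i.e.\ $b_j\delta\equiv-\ao\pmod{\mu_j}$. As $g_j=\gcd(b_j,\mu_j)$ divides $\ao$, each congruence is solvable, its solution set being the residue class of $-(\ao/g_j)(b_j/g_j)^{-1}$ modulo $\mu_j/g_j$. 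The moduli $\mu_j/g_j$ are pairwise relatively prime, so by Theorem~\ref{chinese} there is a Gaussian integer $\delta_0$, unique modulo $M':=\prod_j(\mu_j/g_j)$, such that every $\delta\equiv\delta_0\pmod{M'}$ satisfies all $k$ congruences simultaneously.

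Two elementary observations about the class $\delta_0+(M')$ carry most of the remaining load. First, $\gcd(\delta_0,M')=1$: a common Gaussian prime $\pi$ would divide some $\mu_j/g_j$, yet $\delta_0\equiv-(\ao/g_j)(b_j/g_j)^{-1}\pmod{\mu_j/g_j}$, in which $\ao/g_j=\prod_{l\ne j}g_l$ is prime to $\mu_j$ (coprimality of the $\mu_l$) and $(b_j/g_j)^{-1}$ is a unit modulo $\mu_j/g_j$, so $\delta_0\not\equiv0\pmod\pi$ — a contradiction. Second, consequently, for any rational prime $p$ and any $\delta$ in the class, $p\mid\delta$ forces $p\nmid N(M')$: if $p\mid N(M')$ then some Gaussian prime $\pi\mid p$ divides $M'$, so $\delta\equiv\delta_0\not\equiv0\pmod\pi$ by the first observation.

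Next I would cut the class down to the directions I want. Using the first observation, impose the finitely many further conditions $\delta\not\equiv0\pmod\pi$ for those Gaussian primes $\pi\mid\ao$ with $\pi\nmid M'$ (for $\pi\mid\ao$ with $\pi\mid M'$ this already holds); combining with $M'$ via Theorem~\ref{chinese} puts $\delta$ in a residue class $\delta_1\pmod{M''}$ with $\gcd(\delta_1,M'')=1$, and forces $\gcd(\ao,\delta)=1$. Inside this class I claim infinitely many $\delta$ satisfy $\gcd(\re\delta,\im\delta)=1$ — equivalently, are divisible by no rational prime. By the second observation the only primes that can spoil this are the $p\nmid N(M'')$, and for such $p$ the members of the class divisible by $p$ form a subclass of density $1/p^2$ (since $(p)$ and $(M'')$ are coprime ideals). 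As $\sum_p 1/p^2<1$, a union bound leaves a positive proportion of the class with coprime coordinates — hence infinitely many — and intersecting further with $\re\delta>0$ and $N(\delta)>4\,N(\ao)$ still leaves infinitely many such $\delta$.

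Finally, for each such $\delta$ let $L$ be the Gaussian line through $\ao$ in the direction $\delta$. Since $\gcd(\re\delta,\im\delta)=1$ and $\re\delta>0$, Lemma~\ref{notation} makes $\delta$ the distinguished direction of $L$; since $N(\delta)>4N(\ao)$ we have $N(\ao+n\delta)\ge(|n\delta|-|\ao|)^2\ge(|\delta|-|\ao|)^2>N(\ao)$ for every $n\ne0$, so $\ao$ is the distinguished least‑norm point of $L$; and $\gcd(\ao,\delta)=1$ makes $L$ primitive. Hence $\alpha_{b_j}=\ao+b_j\delta$ is divisible by $\mu_j$ for every $j$, and distinct $\delta$ yield distinct distinguished directions, hence distinct lines. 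The step I expect to demand the most care is extracting infinitely many directions with coprime real and imaginary parts from a prescribed residue class (the union–bound sieve above), together with the bookkeeping needed to guarantee that $(\ao,\delta)$ is the genuine distinguished pair of its line; everything else is repeated use of the Chinese Remainder Theorem over $\zi$.
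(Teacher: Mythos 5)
Your construction is correct, and its first half runs parallel to the paper's own proof: you build $\ao$ out of the greatest common divisors $g_j=\gcd(b_j,\mu_j)$ (the paper's $\gamma_j$, which it multiplies by an extra free factor $\lambda$), translate $\mu_j\mid\ao+b_j\delta$ into the congruences $\delta\equiv-(\ao/g_j)(b_j/g_j)^{-1}\pmod{\mu_j/g_j}$, solve them simultaneously by Theorem \ref{chinese}, and add a further condition to force $\gcd(\ao,\delta)=1$ --- exactly the role of the paper's auxiliary modulus $\beta$ (one small repair: ``$\delta\not\equiv 0\pmod{\pi}$'' is not a single residue class, so, as the paper does with $x\equiv 1\pmod{\beta}$, you should fix a specific nonzero residue modulo the product of those primes before invoking CRT). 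Where you genuinely diverge is in producing infinitely many admissible $\delta$ inside the class $\delta_1\pmod{M''}$: the paper invokes Dirichlet's theorem on primes in arithmetic progressions, taking $\delta=p+Mi$, $M+pi$ or $ph+si$ with $p$ a large prime so that $\gcd(\re(\delta),\im(\delta))=1$ is visible by inspection, whereas you note that a rational prime $p$ can divide a member of the class only if $p\nmid N(M'')$, in which case it carves out a subcoset of relative density $1/p^2$, and you sieve with the union bound $\sum_p p^{-2}<1$. That is a more elementary route (no Dirichlet needed), and your triangle-inequality criterion $N(\delta)>4N(\ao)$ is also a cleaner way than the paper's vertex computation of $f(x)$ to guarantee that $(\ao,\delta)$ is the distinguished pair of Lemma \ref{notation}, so that $\alpha_{b_j}=\ao+b_j\delta$ really is the $b_j$-th integer on $L$. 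Two details deserve care in a write-up: the density and union-bound claims should be justified by counting class members in boxes of side $X$ (only primes $p\le X$ can divide such members, and the accumulated lattice-point error terms are $o(X^2)$); and after intersecting with $\re(\delta)>0$ the bound survives either because $\sum_p p^{-2}\approx 0.45<1/2$, or, more robustly, by running the same sieve inside the half-plane, where the relative densities are unchanged. With those points supplied, your argument yields infinitely many distinct primitive lines through the single point $\ao$, which proves the theorem; the paper additionally varies $\ao$ via $\lambda$, but that is not needed for the statement.
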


\begin{proof}  
To show there are infinitely many primitive Gaussian lines $L$ that satisfy the desired divisibility conditions, we show that there are infinitely many Gaussian integers
 $\alpha_0=a+bi$ and $\delta=c+di$ that satisfy all of the following properties:
\begin{quote} \begin{enumerate}
 \item[(a)] $N(\ao+n\delta)>N(\ao)$ for all $n\neq 0$, $n\in\Z$;
 \item[(b)] gcd$(c,d)=1$ and $c\geq 0$;
  \item[(c)] $\alpha_0$ and $\delta$ are relatively prime over $\zi$;
  \item[(d)] $\mu_j $ divides  $ \alpha_{b_j}\hskip-.035in=\ao+b_j\delta$  for all $1 \leq j\leq k$.
 \end{enumerate}
\end{quote}

We first choose $\ao$. For $1 \leq j\leq k$, let $\gamma_j\in\Z[i]$ be a common divisor of $\mu_j$ and $b_j$ with maximal norm (each $\gamma_j$ is uniquely defined up to multiplication by a unit in $\zi$). Let $\lambda$ be any Gaussian integer that is relatively prime to both $\mu_1\mu_2\cdots\mu_k$ and $b_1b_2\cdots b_k$.
Define $\ao$ by 
$$\alpha_0=\lambda\prod_{j=1}^k \gamma_j\in\zi.$$
%Multiply $\ao$ by a unit if necessary to assume $\ao=a+bi$, where $a,b\geq 0$.
There are infinitely many possibilities for $\lambda$, so there are infinitely many possibilities for $\ao$.

For each $\ao$, we show there are infinitely many  $\delta=c+di\in\zi$ such that the above properties (a)--(d) are satisfied.  Property (d) is equivalent to $\delta$ being a solution to the system of $k$ congruences
$$\ao+b_j x\equiv 0\pmod{\mu_j}, \ \ 1 \leq j\leq k.$$
Dividing by $\gamma_j$ for each $j$, this is equivalent to $\delta$ being a solution to the system
$$x\equiv -\left({\ao\over\gamma_j}\right)\kappa_j^{-1} \pmod{\omega_j}, \ \ 1 \leq j\leq k,$$
where each $\kappa_j=b_j/\gamma_j\in\zi$ is relatively prime to $\omega_j=\mu_j/\gamma_j\in\zi.$
Note that each $\ao/\gamma_j$ is also relatively prime to $\omega_j$  since 
$\omega_1,\omega_2,\ldots,\omega_k$
 are pairwise relatively prime. Thus, any solution to this latter system of congruences is relatively prime to the product $\omega_1\omega_2\cdots\omega_k$. 
Since $\delta$ will be a solution, we include an additional congruence to insure that any solution is also relatively prime to $\alpha_0$ and so property~(c) will automatically be satisfied.  Let $\beta$ be the product of all the Gaussian primes that divide $\alpha_0$ but do not divide 
 $\omega_1\omega_2\cdots\omega_k$, and  let $\beta=1$ if no such Gaussian primes exist.  
 Then $\delta$ is relatively prime to $\ao$ if it is relatively prime to both $\beta$ and 
 $\omega_1\omega_2\cdots\omega_k$.
 Thus,  to insure that properties (c) and (d) are both satisfied,  it is sufficient that $\delta$  be a solution to the following system of $k+1$ congruences:
\begin{align*}
x & \equiv 1\pmod\beta,\  \ {\text{and}} \\
x &\equiv -\left({\ao\over\gamma_j}\right)\kappa_j^{-1} \pmod{\omega_j}, \ \ 1 \leq j\leq k. 
\end{align*}
This system has a unique solution 
$\tau=r+si\in\zi$ modulo $\beta\omega_1\omega_2\cdots\omega_k$ by CRT for Gaussian integers (Theorem
\ref{chinese}) since $\beta,\omega_1,\omega_2,\ldots,\omega_k$
 are pairwise relatively prime.  Thus, it remains to construct $\delta=c+di$ that satisfies  properties (a) and (b), and such that $\delta\equiv\tau\pmod{\beta\omega_1\omega_2\cdots\omega_k}$, so that properties (c) and (d) hold as well.  
 
To satisfy property (a), we construct $\delta=c+di$ 
 such that 
 $$N(\alpha_n)=N(\ao+n\delta)=(c^2+d^2)n^2+2(ac+bd)n+a^2+b^2, \ \ n\in\Z,$$
obtains its minimum value only when $n=0$. For any $c,d\in\Z$, the quadratic function,
$$f(x)=(c^2+d^2)x^2+2(ac+bd)x+a^2+b^2, \ \ x\in\R,$$
obtains its absolute minimum when $f^\prime(x)=0$, $i.e.$, when 
$x={-(ac+bd)/({c^2+d^2})}.$
Thus, since $f$ is symmetric, for property (a) to be satisfied and $f(0)$ to be the minimum integer value of $f$, it is sufficient that $c$ and $d$ satisfy
\begin{equation}\label{1prime}-{1\over 2}<{{ac+bd}\over{c^2+d^2}}<{1\over 2}.\end{equation}
 For a fixed $d$,
$$\lim_{c\rightarrow\infty}\left({{ac+bd}\over{c^2+d^2}}\right)=0,$$
so (\ref{1prime})  holds for all $c$ larger than some bound that depends on $d$.  We use this fact to complete the proof.

It is sufficient to choose $\delta=c+di$ such that  (\ref{1prime}) holds,  gcd$(c,d)=1$, $c\geq 0$, and 
$\delta\equiv\tau\equiv r+si\pmod{\beta\omega_1\omega_2\cdots\omega_k}$.
Let $M=N(\beta\omega_1\omega_2\cdots\omega_k)\in\Z$. We first consider $s=0$.  In this case, $\tau=r$ is relatively prime to $M$ since is a non-zero rational integer that is relatively prime to 
$\beta\omega_1\omega_2\cdots\omega_k$.  
It follows from Dirichlet's Theorem on Primes in Arithmetic Progressions, that there are infinitely many rational primes congruent to $r$ modulo $M$. Thus, we can choose  a rational  prime $p$ such that $p\equiv r\pmod{M}$, $p>M$, and $p$ is large enough so that (\ref{1prime}) holds for $c=p$ and $d=M$.
 Define $\delta$ by 
$\delta = p+Mi$. Then, (\ref{1prime}) holds and gcd$(c,d)=1$, since $p$ is prime and larger than  $M$. 
Also, $\delta\equiv\tau\pmod{M}$, so $\delta\equiv\tau\pmod{\beta\omega_1\omega_2\cdots\omega_k}$, since $\beta\omega_1\omega_2\cdots\omega_k$ divides $M$.
Thus, $\ao$ and $\delta$ define a primitive Gaussian line that satisfies the divisibility conditions stated in Theorem~\ref{newchinese}.  Moreover,  according to Dirichlet's Theorem, there are infinitely many choices of the prime $p$. Thus, there are infinitely many choices for $\delta$, and so infinitely many primitive Gaussian lines with the same $\alpha_0$ that satisfy the conditions.

%NOT CORRECT: The line satisfies the desired divisibility conditions  since $\delta\equiv\tau\pmod{N(\ao)}$ implies $\delta\equiv\tau\pmod{\omega_1\omega_2\cdots\omega_k}$.
%The omega_j may have additional prime divisors that do not divide alpha_0

Similarly, if $r=0$, then $\tau=si$, where $s$ is a non-zero rational integer that is relatively prime to $M$.  Proceed as above to get a rational prime $p\equiv s\pmod{M}$, $p>M$, and  $p$ is large enough so that (\ref{1prime}) holds for $c=M$ and $d=p$.
 Define $\delta$ by $\delta=M+pi$. Then $\ao$ and $\delta$ define a primitive Gaussian line that satisfies the divisibility conditions stated in Theorem \ref{newchinese}. Again, by Dirichlet's Theorem, there are infinitely many choices of the prime $p$ and thus infinitely  many primitive Gaussian lines with this $\alpha_0$ that satisfy these  conditions.

Finally, suppose $r$ and $s$ are both non-zero rational integers.  Let $h$ be the smallest positive rational divisor of $r$ such that gcd$(r/h,M)=1$.  Again by 
Dirichlet's Theorem, we can find a  rational prime $p>s$ such that $p\equiv r/h\pmod M$ 
and $p$ is large enough so that (\ref{1prime}) holds for $c=ph$ and $d=s$.
Define $\delta$ by 
$$\delta=ph+si.$$ 
Then $\delta\equiv\tau\pmod{\beta\omega_1\omega_2\cdots\omega_k}$.
To see that gcd$(ph,s)=1$, first observe that gcd$(p,s)=1$ since $p>s$ is prime. Also, gcd$(h,s)=1$, since any common rational prime divisor $q$ of $h$ and $s$ is also a common divisor of $\tau$ and $M$.  Hence, there is a Gaussian prime that lies over $q$ that divides both $\tau$ and $\beta\omega_1\omega_2\cdots\omega_k$, which is a contradiction since they are relatively prime.   Thus, as above, $\ao$ and $\delta$ define a primitive Gaussian line that satisfies the required divisibility conditions, and again there are infinitely many choices of $\delta$ by Dirichlet's Theorem.

\end{proof}

% Divisor Set
\section{The Divisor Set of a Gaussian Line.}\label{divisor}

We now return to questions about divisibility on Gaussian lines related to those discussed in Section \ref{div}.  For a given Gaussian line $L$, we first characterize those Gaussian-prime powers that exactly divide some Gaussian integer on $L$.  Using this, our main theorem in this section gives a complete characterization of the divisor set $\DL $ of $L$.

\medskip

Theorem  \ref{PL} in Section \ref{div} resolves the question of which Gaussian primes occur in the divisor set $\DL$ of $L$, but does not address division by prime powers.  For example, Theorem~\ref{PL}  does not answer the following  question: If $\pi\in\DL$, then is $\pi^{100}$ guaranteed to be in $\DL$? Nor does it say anything about which prime powers $\pi^k$ {\em exactly divide} some Gaussian integer $\alpha_n$ on $L$ ($i.e.$, $\pi^k$ divides $\alpha_n$, but $\pi^{k+1}$ does not). For example,  if $\pi^{50}\in\DL$, then certainly $\pi, \pi^2,\ldots,\pi^{49}\in\DL$, but is $\pi$ guaranteed to exactly divide some Gaussian integer  on $L$? What about $\pi^2$ or $\pi^3$ or any other power of $\pi$?  Our next theorem shows that the answer to all of these questions is {\em YES} whenever $\pi$ lies over a rational prime $p\equiv 1\pmod 4$, but is conditional for other values of $p$.  We restrict to lines with $\Delta\neq 0$ since this simplifies the proof and exact division by all prime powers holds on the real and imaginary lines. 

\begin{theorem} \label{exact}  Let  $L$ be a primitive Gaussian line with $\Delta\neq 0$. Suppose $\pi$ is a Gaussian prime that lies over the rational prime $p$.
\begin{enumerate}
\item If $p\equiv 1\pmod 4$, then the following are equivalent:
\begin{enumerate}
\item $\pi$ does not divide $\delta$.
\item $\pi^k\in\DL$ for some positive integer $k$.
\item  For every positive integers $r$,  $\pi^r$ exactly divides some Gaussian integer  on $L$. In particular, $\pi^r\in\DL$ for all positive integers $r$.
%THIS IS FALSE: \item For every positive integer $k$, there is a unique $n\in\Z$,  $0\leq n<p^r$, such that $\pi^r$ exactly divides $\alpha_n$.
\end{enumerate}
\item If $p=2$,  then the following are equivalent:
\begin{enumerate}
\item $1+i$ does not divide $\delta$.
\item  $(1+i)^k\in\DL$ for some positive integer $k$.
%\item  There is a positive integer $k$ such that  $(1+i)^k$  exactly divides some Gaussian integer $\alpha_m$ on $L$.
\item  Let $2^s$ be the exact power of $2$ that divides $\Delta$, and $\beta\in\zi$ have 2-power norm.  Then $\beta$ exactly divides some Gaussian integer $\alpha_n$ on $L$ if and only if $\beta$ is an associate of $2, 2^2, \ldots, 2^s$, or $2^{s}(1+i)$.  That is, $(1+i)^t\in\DL$ if and only if $0\leq t\leq 2s+1$, but  $(1+i)^t$ exactly divides a Gaussian integer on $L$ if and only if in addition $t$ is even or $t=2s+1$.
\end{enumerate}
\item If $p\equiv 3\pmod 4$ (so $\pi$ is an associate of $p$), then $p^k$ exactly divides some Gaussian integer $\alpha_n$ on $L$ if and only if $p^k$ divides $\Delta$.
\end{enumerate}
\end{theorem}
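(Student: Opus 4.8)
The plan is to analyze divisibility of $\alpha_n = \ao + \delta n$ by $\pi^r$ through the quadratic $f(n) = N(\alpha_n)$ of equation (\ref{N}), whose discriminant is $-4\Delta^2$, and to track valuations of $f(n)$ as $n$ ranges over $\Z$. For part (3), the case $p \equiv 3 \pmod 4$, note that $p$ is inert, so $p^k \mid \alpha_n$ in $\zi$ is equivalent to $p^k \mid \re(\alpha_n)$ and $p^k \mid \im(\alpha_n)$ over $\Z$ simultaneously, and "$p^k$ exactly divides $\alpha_n$" means $p^k \mid \alpha_n$ but $p^{k+1} \nmid \alpha_n$. The argument of Theorem \ref{ZL} (with $p^t$ replaced by $p^k$) already shows $p^k \mid \alpha_n$ for some $n$ iff $p^k \mid \Delta$; I would refine it to show that when $p^k \mid \Delta$ but $p^{k+1} \nmid \Delta$, one can choose $n$ so that $\re(\alpha_n) = a + nc$ and $\im(\alpha_n) = b + nd$ are divisible by exactly $p^k$ — this follows because $c,d$ are not both divisible by $p$, so the common $p$-valuation of $a+nc$ and $b+nd$ is governed by $ad - bc = \Delta$ modulo powers of $p$.

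For part (1), $p \equiv 1 \pmod 4$: (a)$\Rightarrow$(c)$\Rightarrow$(b) is the substantive direction, and (b)$\Rightarrow$(a) is immediate since $\pi \mid \delta$ forces $\pi \nmid \alpha_n$ for all $n$ (as $\ao, \delta$ are relatively prime). Assuming $\pi \nmid \delta$, I would localize at $\pi$: since $\delta$ is a $\pi$-adic unit, the map $n \mapsto \alpha_n = \ao + \delta n$ is an affine bijection $\Z_p \to \Z_p[i]_\pi$, and $v_\pi(\alpha_n) = v_\pi(n - n_0)$ where $n_0 = -\ao \delta^{-1}$ is the unique $\pi$-adic root. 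Since $n$ ranges over all of $\Z$ (dense in $\Z_p$), $v_\pi(\alpha_n)$ takes every nonnegative integer value, giving exact division by $\pi^r$ for every $r$; this is cleaner than manipulating $f$ directly but $f$ can be used as a back-up (its two simple roots mod $p$, coming from $-4\Delta^2$ being a nonzero square mod $p$ when $p \nmid \Delta$, separate $\pi$ from $\bar\pi$). One subtlety: if $p \mid \Delta$ then $-4\Delta^2 \equiv 0$ and $f$ has a double root mod $p$, so $\pi$ and $\bar\pi$ might both divide the same $\alpha_n$ — but the localization argument sidesteps this, since $\pi \nmid \delta$ still guarantees a unique simple $\pi$-adic root of $n \mapsto \alpha_n$. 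I'd present the $\pi$-adic argument and remark that the hypothesis $\Delta \neq 0$ is not even needed here (it is imposed for uniformity with parts (2) and (3)).

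For part (2), $p = 2$: this is the delicate case because $1+i$ is ramified, $N(1+i) = 2$, and the correspondence between $v_{1+i}(\alpha_n)$ and $v_2(f(n))$ is two-to-one with a parity obstruction. The equivalence (a)$\Leftrightarrow$(b) goes as in part (1). For (c), I would set $\epsilon_n = v_{1+i}(\alpha_n)$ and relate it to $v_2(f(n)) = v_2(N(\alpha_n))$: since $N(\pi^t) = 2^t$ for $\pi = 1+i$, we get $v_2(f(n)) = \epsilon_n$ — wait, more carefully, $v_2(N(\alpha_n)) = v_{1+i}(\alpha_n) $ only counts the $1+i$ part, and since $2 = -i(1+i)^2$ the integer $2^j$ contributes $2j$ to $v_{1+i}$; so writing $\alpha_n = (1+i)^{\epsilon_n} u_n$ with $u_n$ a unit times an odd-norm part, one finds $v_2(f(n)) = \epsilon_n$. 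The key computation is $v_2(f(n)) = v_2(N(\delta) n^2 + Tr(\ao\bar\delta)n + N(\ao))$: since $1+i \nmid \delta$ we have $N(\delta)$ odd, so $f(n) \bmod 2^m$ behaves like a nondegenerate quadratic, and completing the square, $4 N(\delta) f(n) = (2N(\delta)n + Tr(\ao\bar\delta))^2 + 4\Delta^2$. Writing $w = 2N(\delta)n + Tr(\ao\bar\delta)$ (an arbitrary even integer as $n$ varies, since $Tr$ is even and $N(\delta)$ odd) and $v_2(\Delta) = s$, I'd analyze $v_2(w^2 + 4\Delta^2) = v_2(w^2 + (2^{s+1}\Delta')^2)$ with $\Delta'$ odd: if $v_2(w) < s+1$ this is $2v_2(w)$ (even, realizing all values $2, 4, \ldots, 2s$); if $v_2(w) > s+1$ it is $2s+2$; if $v_2(w) = s+1$ it is $v_2(w^2/4^{s+1} + \Delta'^2) + 2s+2$ where the first term is $v_2$ of a sum of two odd squares, hence exactly $v_2 \geq 3$ of something, but an odd square is $1 \bmod 8$ so a sum of two is $2 \bmod 8$, giving total $2s+3$... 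I need to recompute to land on the stated answer $2s+1$ for the odd case and $2s+2$ not appearing — so the exact bookkeeping of the factor of $4 = N(1+i)^2$ and the translation back from $v_2(f(n))$ to $v_{1+i}(\alpha_n)$ is where I expect the real work and the main risk of sign/offset errors. The upshot I would aim to prove: $v_{1+i}(\alpha_n) \in \{0, 2, 4, \ldots, 2s\} \cup \{2s+1\}$ with all these values attained (the even ones from the generic strata, the value $2s+1$ from the critical stratum $v_2(w) = s+1$), and that $v = 2s+1$ is the unique odd value and there is no $v = 2s+2$ or higher — matching statement (c).

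The main obstacle, clearly, is part (2)(c): carrying out the 2-adic valuation analysis of $w^2 + 4\Delta^2$ across the three strata cleanly, and correctly converting between the rational-integer valuation $v_2(N(\alpha_n))$ and the Gaussian valuation $v_{1+i}(\alpha_n)$, including pinning down why $t = 2s+1$ is achievable but $t = 2s+2$ is not (intuitively: at the critical stratum the "sum of two odd squares is $2 \bmod 8$" phenomenon caps the valuation one step above where the two contributions would otherwise cancel). Parts (1) and (3) are comparatively routine given the $\pi$-adic / localization viewpoint and the machinery of Theorems \ref{periodicity}, \ref{ZL}, and \ref{PL}.
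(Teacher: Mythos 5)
Your approach is correct, but it is genuinely different from the paper's. The paper proves parts (1) and (2) by elementary stepping and induction arguments entirely inside $\zi$: it steps from $\alpha_m$ to $\alpha_{m+p^rq}$ (or $\alpha_{m+2^{h-1}}$, $\alpha_{r+2^{j+1}q}$) and, for the inductive step, packages the quotients $\alpha_s/\pi^t + \overline\pi^{\,t}\delta q$ (resp.\ $\alpha_r/(1+i)^t+\mu(1+i)\delta q$) as the integers on an auxiliary primitive Gaussian line $L^\prime$ with $\delta^\prime=\overline\pi^{\,t}\delta$ (resp.\ $(1+i)\delta$), to which Theorems \ref{periodicity}, \ref{ZL}, \ref{PL} are applied again; part (3) is a downward induction from the top power $p^s\,\|\,\Delta$. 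You instead compute valuations directly: a $\pi$-adic localization for part (1) and a $2$-adic analysis of the norm form for part (2). Your part (2) computation does close, and the offset you worried about is only the factor $4N(\delta)$: writing $w=2N(\delta)n+Tr(\ao\overline\delta)=2W$ with $W=N(\delta)n+(ac+bd)$, the identity $4N(\delta)f(n)=w^2+4\Delta^2$ gives $v_{1+i}(\alpha_n)=v_2(f(n))=v_2\bigl(W^2+4^{s}\Delta'^2\bigr)$ with $\Delta=2^s\Delta'$, $\Delta'$ odd; the three strata $v_2(W)<s$, $v_2(W)=s$, $v_2(W)>s$ yield valuations $2v_2(W)\in\{0,2,\ldots,2s-2\}$, $2s+1$ (a sum of two odd squares is $2$ mod $8$), and $2s$ respectively, and since $N(\delta)$ is odd, $W$ runs through every residue class modulo every power of $2$ as $n$ ranges over $\Z$, so all of these values are attained and no others occur --- exactly statement 2(c), including the uniqueness of the odd exponent $2s+1$ and the absence of $2s+2$.

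Two loose ends you should tidy. First, in part (3) the theorem asserts exact division by $p^k$ for \emph{every} $k$ with $p^k\mid\Delta$, not only for the top power $s=v_p(\Delta)$, which is all your sketch addresses; the fix is the same observation you invoke: assuming $p\nmid c$, choose $n$ with $v_p(a+nc)=k$ exactly, and then $c(b+nd)=d(a+nc)-\Delta$ forces $v_p(b+nd)\geq k$, so $p^k$ exactly divides $\alpha_n$ (and the ``only if'' is Theorem \ref{ZL}). Second, in part (1) make the density step explicit: with $n_0=-\ao\delta^{-1}$ in the completion at $\pi$ (which is $\Z_p$ since $p$ splits), choose $n\in\Z$ congruent to the truncation of $n_0+p^r$ modulo $p^{r+1}$ to get $v_\pi(\alpha_n)=r$ exactly. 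What each route buys: your valuation argument is shorter and makes structurally transparent why only even exponents up to $2s$ plus the single odd exponent $2s+1$ occur, and, as you note, part (1) needs no hypothesis on $\Delta$; the paper's argument stays elementary (no completions) and its auxiliary-line device recycles the machinery of Section \ref{div}, in the same spirit as the rest of the paper.
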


\begin{proof} We consider the three cases separately.

\medskip

\noindent {\underline{\em{Case 1}}:}

Suppose $p\equiv 1\pmod 4$.  Statements 1(a) and 1(b) are equivalent by Theorem \ref{PL}.  Since 1(c) trivially implies 1(b), we only need to show that 1(b) implies 1(c).  For this, suppose that $\pi^k\in\DL$, say $\pi^k$  divides $\alpha_m$. Then $\pi^h$ exactly divides $\alpha_m$ for some $h\geq k$.   Let $r$ be a positive integer.   If $r< h$, then  1(c) holds since $\pi^r$ exactly divides $\alpha_n$ for $n=m+p^rq$, where $q$ is any integer not divisible by $p$.  To see this, write $\alpha_n=\alpha_0+(m+p^rq)\delta=\alpha_m+p^rq\delta$, and use that $\pi^h$ exactly divides $\alpha_m$ while $\pi^r$ exactly divides $p^rq\delta$.  Note that by considering the special case where $r=1$, this shows in general that if a Gaussian prime $\pi$ does not divide $\delta$ then $\pi$ {\em exactly} divides some Gaussian integer $\alpha_n$ on $L$.

We use induction and the general fact for $r=1$ given above to show that 1(c) holds for $r\geq h$ as well.  If $r=h$, then $\pi^r$ exactly divides $\alpha_m$ by hypothesis, so 1(c) holds in this case.  Suppose it holds for some $t\geq h$, say $\pi^t$ exactly divides $\alpha_s$. Let $\omega=\alpha_s/\pi^t\in\zi$.
%, so in particular $\pi\nmid \omega$
 For $q\in\Z$, consider 
$$\alpha_{s+p^{t}q}=\alpha_s+p^{t}q\delta=\pi^t(\omega+\overline\pi^{t}\delta q),$$
where $p=\pi\overline\pi$ and $\overline\pi$ is not an associate of $\pi$ since $p\equiv 1\pmod 4$. 
Now, $\overline{\pi}^{t}\delta$ has no rational integer divisors since $\pi\nmid\delta$.  Also, $\omega$ and $\overline{\pi}^{t}\delta$ are relatively prime since $\alpha_s$ and $p^t\delta$ are relatively prime. Thus, the numbers $\omega+\overline\pi^{t}\delta q$, $q\in\Z$, are the Gaussian integers on a different primitive Gaussian line $L^\prime$ with $\delta^\prime=\overline\pi^{t}\delta$. Since $\pi \nmid \delta^\prime$, it follows from the general result  for $r=1$, that there is a $q_0\in\Z$ such that $\pi$ exactly divides the Gaussian integer $\omega+\overline\pi^{t}\delta q_0$ on $L^\prime$. Thus  $\pi^{t+1}$ exactly divides the Gaussian integer $\alpha_n$ on $L$ for $n=s+p^{t}q_0$, and 1(c) holds for $r=t+1$.  By induction it holds for all $r$.

%Not correct:Then by the Division Algorithm, there is a unique such $n$ that satisfies $0\leq n<p^r$.  If $r=t$ then $\pi^r$ exactly divides $\alpha_n$ for $n=m+p^{r+1}q$,  where $q$ is any integer. Again this is because 
%$\alpha_n=\alpha_m+p^{r+1}q\delta$ and $\pi^r$ exactly divides $\alpha_m$ while $\pi^{r+1}$ divides $p^{r+1}q\delta$.  

\medskip

\noindent {\underline{\em{Case 2}}:}

Suppose  $p=2$. As above, it is sufficient to prove statement 2(b) implies 2(c). Suppose $(1+i)^k\in\DL$ for some positive integer $k$.  Then $(1+i)\in\DL$ and $1+i$ does not divide $\delta$. 
Let $2^s$, $s\geq 0$, be the exact power of $2$ that divides $\Delta$.  Then  $2^s\in\DL$, but $2^{s+1}\not\in\DL$ by Theorem \ref{ZL}. Since $2$ ramifies in $\zi$, this is equivalent to $(1+i)^{2s}\in\DL$, but $(1+i)^{2s+2}\not\in\DL$. 

We first claim that $(1+i)^{2s+1}\in\DL$. For this, note that since  $(1+i)^{2s}\in\DL$, there is a  Gaussian integer $\alpha_m$ on $L$ such that $(1+i)^{2s}$ divides $\alpha_m$.  If $(1+i)^{2s+1}$ divides $\alpha_m$ then $(1+i)^{2s+1}\in\DL$ as claimed.  So suppose $(1+i)^{2s+1}$ does not divide $\alpha_m$. By Theorem \ref{periodicity}, $(1+i)^{2s}$ divides $\alpha_{m+2^s}$ since $\nu((1+i)^{2s})=2^s$. Now, 
$$\alpha_{m+2^s}=\alpha_m+2^s\delta=2^s(\omega+\delta),$$
where $\omega=\alpha_m/2^s\in\zi$ is not divisible by $1+i$.  Since neither $\omega$ nor $\delta$ is divisible by $1+i$, their sum must be divisible by $1+i$. Thus, $(1+i)^{2s+1}$ divides $\alpha_{m+2^s}$, and $(1+i)^{2s+1}\in\DL$ in this case as well.

Thus we have  $(1+i)^t\in\DL$ if and only if $0\leq t\leq 2s+1$, and so it remains to consider exact division by  $(1+i)^t$. We consider $t$ even and $t$ odd separately.  First suppose that  $t=2h$ is even.  We claim that  $(1+i)^t$ exactly divides some Gaussian integer $\alpha_n$ on $L$, or equivalently, that $2^h$ divides  $\alpha_n$ but  $2^h(1+i)$ does not.  This is true when $t=s$ since $(1+i)^{2s}$  exactly divides either $\alpha_m$ or $\alpha_{m+2^s}$ by the preceding paragraph. So suppose that for some $h$, $0<h\leq s$, we have $(1+i)^{2h}$ exactly divides $\alpha_n$ for some $n$.  Consider,
$$\alpha_{n+2^{h-1}}=\alpha_n+2^{h-1}\delta=2^{h-1}(\omega+\delta),$$
where $\omega=\alpha_n/2^{h-1}\in\zi$ is divisible by $(1+i)^2$ and $\delta$ is not divisible by $1+i$. Thus, $\omega+\delta$ is not divisible by $1+i$ , and so $(1+i)^{2h-2}$ exactly divides $\alpha_{n+2^{h-1}}$.
The claim for odd $t$ follows by induction.

Now suppose $t$ is odd and $(1+i)^t$ exactly divides some Gaussian integer $\alpha_r$ on $L$.  For instance, this holds for $t=2s+1$ since $(1+i)^{2s+1}$  exactly divides either $\alpha_m$ or $\alpha_{m+2^s}$.  Write $t=2j+1$, so $\nu((1+i)^t)=2^{j+1}$. Thus, by Theorem \ref{periodicity}, $(1+i)^t$ divides $\alpha_n$ for $n=r+2^{j+1}q$, $q\in\Z$. Now,
$$\alpha_n=\alpha_{r+2^{j+1}q}=\alpha_r+2^{j+1}\delta q=(1+i)^t\left( \omega+\mu(1+i)\delta q\right),$$
where $\omega=\alpha_r/(1+i)^t\in\zi$ is not divisible by $1+i$ and $\mu\in\zi$ is a unit.  Now, the real and imaginary parts of $\mu(1+i)\delta$ must be relatively prime since $1+i$ does not divide $\delta$ and the real and imaginary part of $\delta$ are relatively prime.  Also, $\omega$ and  $\mu(1+i)\delta$ are relatively prime over $\zi$ since $1+i$ does not divide $\omega$ and $\alpha_r$ and $\delta$ are relatively prime.  
Thus, the numbers $\omega+(1+i)\delta q$, $q\in\Z$, are the Gaussian integers on a different primitive Gaussian line $L^\prime$ with $\delta^\prime=(1+i)\delta$. Since $1+i$ divides $\delta^\prime$, it follows from 
Theorem \ref{ZL} that none of the Gaussian integers $\omega+(1+i)\delta q$ are divisible by $1+i$, that is, 
$(1+i)\not\in{{\mathcal{D}}(L^\prime)}$.  Thus, $(1+i)^{t+1}\not\in\DL$, or equivalently, $2^{j+1}\not\in\DL$.  This is a contradiction unless $j=s$. Therefore, if $t$ is odd then $(1+i)^t$ exactly divides some Gaussian integer on $L$ if and only if $t=2s+1$.

% First, we will show that finitely many powers of $1+i$ appear in $\DL$. Let $n$ be such that $2^n$ exactly divides $\Delta$. We will show that if $(1+i) \nmid \delta$, then $(1+i)^{2n+1}$ is in $\DL$, but $(1+i)^{2n+2}$ is not in $\DL$.
%
%Suppose $2^n$ exactly divides $\Delta$ and $(1+i)\nmid \delta$. By Theorem \ref{periodicity}, there exists a $t$ such that $2^n$ divides $\alpha_{t+2^nk}$ for all $k\in \Z$. Let $\omega = \frac{\alpha_t}{2^n}$. Then, $$\alpha_{t+2^nk} = \alpha_0+\delta t + \delta 2^n k = \frac{1}{2^n}(\omega + \delta k).$$
%
%Since $\alpha_t$ is relatively prime to $\delta$, $\omega$ is relatively prime to $\delta$. So $\omega + \delta k$ is the equation of a primitive Gaussian line. Since $(1+i) \nmid \delta$, we know $(1+i) |(\omega +\delta l)$ for some $l$ by Theorem \ref{PL}. So $2^n(1+i)$ divides $\alpha_{t+2^nl}$. Then $(1+i)^{2n+1}$ divides $\alpha_{t+2^nl}$, implying that $(1+i)^{2n+1}$ is in $\DL$. 
%
%Now, suppose $(1+i)^{2n+2} \in \DL$. Then, $2^{n+1} \in \DL$. This contradicts Lemma \ref{finite_mod3} because $2^{n+1}$ does not divide $\Delta$. So $(1+i)^{2n+2}$ is not in $\DL$.

\medskip

\noindent {\underline{\em{Case 3}}:}

Suppose $p\equiv 3\pmod 4$. Then $p$ remains prime in $\zi$ and $\pi$ is an associate of $p$.  By Theorem \ref{ZL}, we know that then $p^k$  divides some Gaussian integer $\alpha_n$ on $L$ if and only if $p^k$ divides $\Delta$. For {\em exact} divisibility, let $s$ be such that $p^s$ exactly divides $\Delta$.  Then $p^s$ exactly divides some $\alpha_m$ on $L$ since $p^{{s+1}}\not\in\DL$.  Then, as in the case $p=2$, we have that $p^{s-1}$ exactly divides
$$\alpha_{m+p^{s-1}}=\alpha_m+p^{s-1}\delta=p^{s-1}\left( \omega+\delta \right),$$
since $\omega=\alpha_m/p^{s-1}$ is divisible by $p$ but $\delta$ is not.
Continue in the same way to get that $p^k$ exactly divides some Gaussian integer on $L$ for all $k$ with $0\leq k\leq s$.
\end {proof}

Putting Theorem \ref{exact}  together with the results in Section \ref{div} yields a characterization of the divisor set $\DL$ of $L$ as follows.

\begin{theorem}\label{bigtheorem} Let  $L$ be a primitive Gaussian line with $\Delta\neq 0$.  A Gaussian integer $\beta$ is in the divisor set $\DL$ of $L$ 
if and only if $\beta$ can be written as
$$\beta= \mu r(1+i)^t\pi_1^{k_1}\pi_2^{k_2} \cdots \pi_m^{k_m},$$
where the variables in this expression are defined as follows: 
\begin{enumerate}
\item[(a)]  $\mu\in\{\pm1, \pm i\}$ is a unit in $\zi$;
\item[(b)]  $r$ is a rational integer that divides $\Delta$;
\item[(c)] $t=0$ if $1+i$ divides $\delta$, and $t\in \{0, 1\}$ otherwise;
\item[(d)] For $1\leq j\leq m$, $\pi_j$ is a Gaussian prime such that  $\pi_j$ does not divide $\delta$,
$N(\pi_j)\neq 2$, and $N(\pi_j)\neq N(\pi_n)$ for $j\neq n$;
\item[(e)] For $1\leq j\leq m$, $k_j \geq 0$ is a rational integer.
\end{enumerate}
%$$\begin{array}{cl} &r \in \ZL;\\
%& \vspace{-0.23cm} \\
%&k_j\geq 0;\\
%& \vspace{-0.23cm} \\
%&\pi_j \in \PL, \ 1\leq j\leq m, such\ that\ 
%N(\pi_j)\neq 2 \ and  \ N(\pi_j)\neq N(\pi_j);\\
%& \vspace{-0.23cm} \\
%&t \in T, \ where\  T=\begin{cases} 
%      \{0\} & \ if\  1+i\ divides\  \delta, \\
 %     \{0, 1\} & {\text otherwise.} 
%   \end{cases}
%\end{array}$$
\end{theorem}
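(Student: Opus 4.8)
The plan is to reduce everything to a \emph{localization at rational primes} and then read off the answer from Theorems~\ref{ZL}, \ref{PL}, and \ref{exact}. For a rational prime $p$, call a Gaussian integer \emph{$p$-primary} if every Gaussian prime in its factorization lies over $p$, and for arbitrary $\beta$ let $\beta_{(p)}$ denote its $p$-primary part, so that $\beta$ is a unit times the finite product $\prod_p\beta_{(p)}$. The first step is to prove
$$\beta\in\DL \iff \beta_{(p)}\in\DL \text{ for every rational prime }p.$$
One direction is immediate, since $\beta_{(p)}\mid\beta$ and $\DL$ is closed under taking divisors. For the converse, note that $\nu(\beta_{(p)})$ is a power of $p$: its norm is, and the $\gcd$ of its real and imaginary parts is a rational integer all of whose Gaussian prime factors lie over $p$, hence a power of $p$ as well. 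Consequently the $\nu(\beta_{(p)})$ are pairwise relatively prime, and iterating Lemma~\ref{ncrt} gives $\prod_p\beta_{(p)}\in\DL$, hence $\beta\in\DL$.

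The second step determines, for each $p$, exactly when $\beta_{(p)}\in\DL$, in a form matched to the asserted normal form. If $p\equiv 3\pmod 4$, then $p$ is inert and $\beta_{(p)}$ is a unit times $p^{k}$; by Theorem~\ref{ZL} this lies in $\DL$ iff $p^{k}\mid\Delta$. (Primitivity of $L$ forces $p\nmid\delta$ automatically, which is why inert primes always contribute to the rational factor and never occur among the $\pi_j$.) If $p=2$, then $\beta_{(2)}$ is a unit times $(1+i)^{t}$; writing $(1+i)^{t}$ as a unit times $2^{\lfloor t/2\rfloor}(1+i)^{t\bmod 2}$ and applying Theorem~\ref{exact}(2) gives: $\beta_{(2)}\in\DL$ iff $2^{\lfloor t/2\rfloor}\mid\Delta$ and, when $t$ is odd, $1+i\nmid\delta$. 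If $p\equiv 1\pmod 4$, write $\beta_{(p)}$ as a unit times $\pi^{a}\bar\pi^{b}$ with $\pi\bar\pi=p$; at most one of $\pi,\bar\pi$ divides $\delta$. If one of them, say $\pi_0$, does, then Theorem~\ref{PL} forces the exponent of $\pi_0$ in $\beta_{(p)}$ to be $0$, and then the surviving power of $\bar\pi_0$ is automatically in $\DL$ by Theorem~\ref{exact}(1). If neither divides $\delta$, set $e=\min(a,b)$ and let $\rho\in\{\pi,\bar\pi\}$ be the factor occurring $\max(a,b)$ times, so $\beta_{(p)}$ is a unit times $p^{e}\rho^{|a-b|}$; here $\rho^{|a-b|}\in\DL$ by Theorem~\ref{exact}(1), while Theorem~\ref{periodicity} shows that the conditions ``$\pi^{a}\mid\alpha_n$'' and ``$\bar\pi^{b}\mid\alpha_n$'' (each a single residue class, once the relevant power is known to lie in $\DL$) have a common solution exactly when $p^{e}\in\ZL$, that is, when $p^{e}\mid\Delta$. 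So in this case $\beta_{(p)}\in\DL$ iff $p^{e}\mid\Delta$.

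The third step assembles these local facts. Gathering the inert-prime powers, the balanced parts $p^{\min(a,b)}$ of split pairs, and the even part $2^{\lfloor t/2\rfloor}$ of the $(1+i)$-power into a single rational integer $r$, the condition $r\mid\Delta$ is (by unique factorization in $\Z$) exactly the conjunction of the individual divisibility conditions $p^{v_p(r)}\mid\Delta$ produced in Step~2, which is condition~(b). What is left of $\beta$ after stripping off $r$ and a unit is $(1+i)^{t'}$ with $t'=t\bmod 2\in\{0,1\}$, together with one power $\pi_j^{k_j}$ for each rational prime $\equiv 1\pmod 4$ at which $\pi$ and $\bar\pi$ occur with unequal exponents — so the $\pi_j$ have pairwise distinct prime norms — and the residual constraints from Step~2 are precisely (c) ($t'=0$ whenever $1+i\mid\delta$) and (d) ($\pi_j\nmid\delta$). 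Conversely, if $\beta$ is presented in the form (a)--(e), then $r\mid\Delta$ together with the local analysis of Step~2 shows every $\beta_{(p)}\in\DL$, and Step~1 then gives $\beta\in\DL$.

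The main obstacle is the prime $p\equiv 1\pmod 4$. There $\pi$ and $\bar\pi$ lie over the same rational prime, so the $\nu$-values of their prime-power factors are both powers of $p$ and therefore \emph{not} coprime; Lemma~\ref{ncrt} no longer applies, and one must argue directly with the periodicity of Theorem~\ref{periodicity} that $\pi^{a}\bar\pi^{b}\mid\alpha_n$ is solvable iff the ``$p^{\min(a,b)}$'' obstruction vanishes (i.e.\ $p^{\min(a,b)}\mid\Delta$), after which the remaining factor $\rho^{|a-b|}$ contributes only the requirement $\rho\nmid\delta$ via Theorem~\ref{exact}(1). A secondary, purely bookkeeping point is to check that ``$r\mid\Delta$'' correctly packages the separate conditions $p^{v_p(r)}\mid\Delta$, and that inert primes and balanced split pairs belong in $r$ rather than among the $\pi_j$; in the normal form each $\pi_j$ is a split prime, i.e.\ $N(\pi_j)$ is prime.
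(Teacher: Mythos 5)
Your proposal is correct and follows essentially the same route as the paper's proof: Lemma \ref{ncrt} reduces the problem to the $p$-primary case (equivalently, $\nu(\beta)$ a prime power), which is then settled prime by prime via Theorems \ref{ZL}, \ref{PL}, and \ref{exact} according to whether $p\equiv 1\pmod 4$, $p=2$, or $p\equiv 3\pmod 4$. The only difference is one of detail: you make explicit the residue-class compatibility argument showing $\pi^{a}\overline{\pi}^{\,b}\in\DL$ exactly when $p^{\min(a,b)}\mid\Delta$, a point the paper leaves implicit in its citation of Theorems \ref{ZL} and \ref{exact}.
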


\begin{proof}
By Lemma \ref{ncrt}, it is sufficient to characterize those $\beta\in\DL$ where  $\nu(\beta)$ is a prime power. Thus,  let $p$ be a rational prime and $\beta\in\zi$ satisfy $\nu(\beta)=p^n$ for some positive integer $n$.  

First suppose $p\equiv 1\pmod 4$, and let $\pi$ be a Gaussian primes that lies over $p$.  We may assume $\pi\in\PL$ by Corollary \ref{minprimes} of Theorem \ref{PL}. If $\overline\pi\not\in\PL$, then by Theorems \ref{ZL} and~\ref{exact}, $\beta\in\DL$ if and only if $\beta = \mu p^t\pi^k$, where  $\mu$ is a unit in $\zi$ and   $t$ and $k$ are non-negative integers, and $p^t$ divides $\Delta$.  If, in addition, $\overline\pi\in\PL$, then $\beta$ can also be of the form $\mu p^t\overline\pi^k$.

If $p=2$ then, up to associates, $1+i$  is the only Gaussian prime that lies over $p$.
Let $2^s$ be the power of $2$ that exactly divides $\Delta$.  It follows from Theorem \ref{exact} that $\beta\in\DL$ if and only if $\beta = \mu 2^r(1+i)^t$, where  $\mu$ is a unit in $\zi$,  $0\leq r\leq s$,  and
$t=0$ if $1+i$ divides $\delta$ and $t\in \{0, 1\}$ otherwise.

Finally, if $p\equiv 3\pmod 4$, then $p$ remains prime in $\zi$.  In this case, it follows from Theorem \ref{ZL} that $\beta\in\DL$ if and only if $\beta=\mu p^r$, where $\mu$ is a unit in $\zi$ and $p^r$ divides $\Delta$.
%did not use OLD PROOF  -- the lemma does not apply and it does not give info about exact divisibility
\end{proof}

\end{document}